\documentclass[a4paper]{amsart}
\usepackage{amssymb} 
\usepackage{amsmath} 
\usepackage{comment}
\usepackage{color}
\usepackage{array, booktabs, comment, mathrsfs, enumerate, multirow}
\usepackage[neverdecrease]{paralist}
\usepackage{tikz-cd}
\usepackage{todonotes}
\usepackage{microtype}
\usepackage{url}
\usepackage[colorlinks]{hyperref}
\hypersetup{citecolor=blue}

\DeclareMathOperator{\Art}{Art}
\DeclareMathOperator{\Aut}{Aut}

\DeclareMathOperator{\N}{N}
\DeclareMathOperator{\Cl}{Cl}

\DeclareMathOperator{\Hom}{Hom}
\DeclareMathOperator{\GL}{GL}
\DeclareMathOperator{\Jac}{Jac}

\DeclareMathOperator{\Pic}{Pic}

\DeclareMathOperator{\Gal}{Gal}

\DeclareMathOperator{\End}{End}

\DeclareMathOperator{\GSp}{GSp}
\DeclareMathOperator{\SO}{SO}

\DeclareMathOperator{\GSpin}{GSpin}

\DeclareMathOperator{\Stab}{Stab}

\DeclareMathOperator{\Ind}{Ind}

\DeclareMathOperator{\Tr}{Tr}

\DeclareMathOperator{\rec}{rec}
\DeclareMathOperator{\WD}{WD}

\newcommand{\A}{{\mathbf A}}
\newcommand{\Q}{{\mathbf Q}}
\newcommand{\Z}{{\mathbf Z}}
\newcommand{\C}{{\mathbf C}}
\newcommand{\R}{{\mathbf R}}
\newcommand{\F}{{\mathbf F}}

\newcommand{\CO}{\mathcal{O}}

\newcommand{\gc}{{\mathfrak{c}}}

\newcommand{\gM}{\mathfrak{M}}
\newcommand{\gm}{\mathfrak{m}}

\newcommand{\gP}{\mathfrak{P}}
\newcommand{\gp}{\mathfrak{p}}
\newcommand{\gq}{\mathfrak{q}}
\newcommand{\gQ}{\mathfrak{Q}}
\newcommand{\gN}{\mathfrak{N}}

\newcommand{\Qbar}{\overline{\Q}}

\newcommand{\uk}{\underline{k}}

\newcommand{\Frob}{\mathrm{Frob}}
\newcommand{\T}{\mathbf{T}}
\newcommand{\old}{\rm old}
\newcommand{\new}{\rm new}


\numberwithin{equation}{section}

\theoremstyle{plain}
\newtheorem{thm}[equation]{Theorem}
\newtheorem{lem}[equation]{Lemma}
\newtheorem{prop}[equation]{Proposition}

\newtheorem{cor}[equation]{Corollary}
\newtheorem{conj}[equation]{Conjecture}
\newtheorem{defn}[equation]{Definition}
\newtheorem{rem}[equation]{Remark}

\theoremstyle{definition}

\setcounter{tocdepth}{2}

\title[Base change action and Hecke orbits]{On the compatibility between base change and Hecke orbits of Hilbert newforms}

\begin{document}

\author{Lassina Demb\'el\'e}
\address{Department of Mathematics, King's College London, Strand WC2R 2LS, London, United Kingdom}
\email{lassina.dembele@gmail.com}
\thanks{The author was supported by EPSRC Grant EP/J002658/1 and a Visiting Scholar grant from the Max-Planck Institute for Mathematics}

\keywords{Abelian varieties, Base Change, Hilbert modular forms}
\subjclass[2010]{Primary: 11F41; Secondary: 11F80}

\maketitle

\begin{abstract} Let $F/E$ be a Galois extension of totally real number fields, with Galois group $\Gal(F/E)$. 
Let $\gN$ be an integral ideal which is $\Gal(F/E)$-invariant, and $k \ge 2$ an integer.
In this note, we study the action of $\Gal(F/E)$ on the Hecke orbits of Hilbert newforms of level $\gN$
and weight $k$. We also discuss the geometric counterpart to this action, which is closely related to
the notion of abelian varieties potentially of $\GL_2$-type. The two actions have some consequences in 
relation with Langlands Functoriality. 

We conclude with an example over the maximal totally real subfield $F = \Q(\zeta_{32})^+$ of 
the cyclotomic field of 32nd root of unity. Let $D$ be the quaternion algebra over $F$ ramified exactly at the unique 
prime above $2$ and $7$ real places, and $X_0^D(1)$ the Shimura curve attached to $D$. Among other things, 
our example shows that the field of $2$-torsion of the Jacobian of the curve $X_0^D(1)$ (and its Atkin-Lehner quotient)
is the unique Galois extension $N/\Q$ unramified outside $2$, with Galois group the Frobenius group 
$F_{17} = \Z/17\Z \rtimes (\Z/17\Z)^\times$. This completes Noam Elkies' answer~\cite{elk15} to a question posed by 
Jeremy Rouse on \verb|mathoverflow.net|. 
\end{abstract}

\section*{\bf Introduction}
Let $F$ be a totally real number field of degree $d$, with ring of integers $\CO_F$. 
Let $\gN$ be an integral ideal of $F$ and $k\ge 2$ an integer. Let $f$ be a Hilbert newform
of level $\gN$, and (parallel) weight $k$; and $L_f$ the field of coefficients of $f$. 
We recall that $L_f = \Q(a_\gm(f): \gm \subseteq \CO_F)$, where $a_\gm(f)$ is the
Hecke eigenvalue of $f$ at the Hecke operator $T_\gm$ for the integral ideal $\gm$. 
By \cite[Proposition 2.8]{shimura78}, $L_f$ is either totally real or CM. 

Let $\tau:\,L_f \hookrightarrow \Qbar$ be a complex embedding. Then, by the Strong Multiplicity One 
Theorem~\cite{miy71} and \cite[Proposition 2.6]{shimura78}, there is a newform $f^\tau$ of level $\gN$
and weight $k$ determined by its Hecke eigenvalues
$$a_\gp(f^\tau) := \tau(a_\gp(f)),\,\,\text{for all primes}\,\, \gp.$$
The {\it Hecke orbit} of the form $f$ is defined as the finite set
$$[f] := \{f^\tau:\, \tau \in \Hom(L_f, \Qbar)\}.$$
Let us further assume that $F$ is Galois over some subfield $E$, and write $G:= \Gal(F/E)$.  
Let us also assume that $\gN = \gN^\sigma$ for all $\sigma \in G$. Then, similarly, for all $\sigma \in G$,
there exists a newform ${}^\sigma\!f$ of level $\gN$ and weight $k$ determined by its Hecke eigenvalues
 $$a_\gp({}^\sigma\!f) := a_{\sigma(\gp)}(f),\,\,\text{for all primes}\,\, \gp.$$
The form $f^\tau$ is often called an {\it exterior twist} while ${}^\sigma\!f$ is known as an {\it inner twist}.

In the literature, exterior twists and inner twists have been studied quite extensively, but separately, to the best of our knowledge. 
However, there is clearly an action of $G$ on Hecke orbits of newforms. Indeed, let $f$ be a newform of level $\gN$ and weight $k$,
$\sigma \in G$ and $\tau \in \Hom(L_f, \Qbar)$. Then, $L_{{}^\sigma\!f} = L_f$, and for $\gp$ prime, we have 
\begin{align*}
a_{\gp}({}^\sigma\!(f^\tau)) = a_{\sigma(\gp)}(f^\tau) = \tau(a_{\sigma(\gp)}(f))= \tau(a_{\gp}({}^\sigma\!f))=a_{\gp}(({}^\sigma\!f)^\tau).
\end{align*}
So that we have a well-defined action of $G$ on the set of Hecke orbits of newforms of level $\gN$ and weight $k$ given by 
$\sigma \cdot [f] = [{}^\sigma\!f]$. Therefore, understanding the action of $G$ on those orbits seems a rather natural question. 

In this short note, we describe the compatibility between base change and Hecke orbits of
Hilbert newforms when the base field is Galois. Our results are essentially a generalisation of those in~\cite{cd17}
to non-solvable extensions under the assumption that the Base Change Conjecture is true. Although the results are somewhat 
straightforward, and are probably known to most experts, they seem to have rather strong implications relating to Langlands 
Functoriality. That was illustrated in~\cite{cd17} where the functorial connection between the
Eichler-Shimura conjecture and the Gross-Langlands conjecture on the modularity of abelian varieties was discussed. 
We conclude the note with an example pertaining to this connection. Let $F$ be the maximal totally real subfield of $\Q(\zeta_{32})$, 
and $D$ the quaternion algebra over $F$ ramified at the unique prime above $2$ and $7$ real places. Let $X_0^D(1)$ be the
Shimura curve attached to $D$. Our example shows that the field of $2$-torsion of the Jacobian of the curve $X_0^D(1)$ 
(and its Atkin-Lehner quotient) is the unique Galois extension $N/\Q$ unramified outside $2$, with Galois group the
Frobenius group $F_{17} = \Z/17\Z \rtimes (\Z/17\Z)^\times$. This completes Noam Elkies' answer~\cite{elk15} to a question 
posed by Jeremy Rouse on \verb|mathoverflow.net|.  

The outline of the paper is as follows. In Sections~\ref{sec:hmf} and~\ref{sec:bc}, we recall the necessary background on Hilbert 
modular forms, and base change. In Section~\ref{sec:bc-and-hecke-comp}, we discuss the compatibility of base change with
Hecke orbits. In Section~\ref{sec:potential-gl2-av}, we introduce the notion of abelian varieties potentially of $\GL_2$-type. We then
discuss their Galois descent properties, and its implications for Langlands Functoriality. Finally, in Section~\ref{sec:gross-ex}, 
we conclude with an example.

\subsection*{Acknowledgements} We would like to thank Fred Diamond, Noam D. Elkies, Toby Gee, Xavier Guitart, and John Voight 
for helpful email exchanges and discussions. In particular, we are grateful to David P. Roberts for pointing out the 
\verb|mathoverflow.net| discussion related to the example in Section~\ref{sec:gross-ex}. Finally, a special thanks to
Benedict H. Gross who encouraged us to write up this note.

\section{\bf Hilbert modular forms}\label{sec:hmf}

In this section, we summarise the results we need on Hilbert modular forms,
their associated automorphic representations and Galois representations. 
We refer to~\cite{car86a, hid88, shimura78, tay89} for further details. 

Le $F$ be a totally real field, and $J_F$ the set of real embeddings of $F$.  Let $\mathfrak{H}_F:=\mathfrak{H}^{J_F}$, where 
$\mathfrak{H}=\{z\in\C|\mathrm{im}(z)>0\}$ is the Poincar\'e upper halfplane. Fix an integer $k \ge 2$. 

\subsection{Hilbert modular forms} We let $G = \mathrm{Res}_{F/\Q}(\GL_2)$, this is
the algebraic group obtained by restriction of scalars of $\GL_2$ from $F$ to $\Q$. By definition of $G/\Q$, we
have
$$G(\A_\Q) = G_f \times G_\infty = \GL_2(\A_F) = \GL_2(\A_F^f) \times \GL_2(\R)^{J_F}.$$ 
Let $G_\infty^+$
be the connected component of the identity element. Then, $G_\infty^+$ acts on $\mathfrak{H}_F$ 
component-wise by Mobius transforms. This action extends uniquely to $G_\infty$ component-wise,
such that on the $v$-th factor, the matrix $\tiny\begin{pmatrix}-1 &0 \\ 0 &1 \end{pmatrix}$ acts by
($z_v \mapsto -\overline{z}_v$). 

For every matrix $\gamma = \begin{pmatrix}a&b\\ c&d \end{pmatrix} \in \GL_2(F)$, and $z \in \C$, we define 
\begin{align*}
(cz + d)^{-k} &:= \prod_{v \in J_F} (c_v z_v + d_v)^{-k};\\
\det(\gamma)^{k - 1 } &:= \prod_{v \in J_F} \det(\gamma_v)^{k - 1}.
\end{align*} 

We let $U = \prod_{\gp} U_\gp$ be a compact open subgroup of $G(\widehat{\Z}) = \GL_2(\widehat{\CO}_F) \subset G_f$,
and consider the space of functions $f:\, G_f/U \times \mathfrak{H}_F \to \C$. 
There is a (left) action of $G_\Q$ on this space, given by
$$(f|_{k}\gamma)(x, z) := \det(\gamma)^{k - 1}  (cz + d)^{-k} f(\gamma x, \gamma z),\,x \in G_f,\,z \in \mathfrak{H}_F,$$
The space of {\it Hilbert modular forms}
$M_{k}(U)$ of weight $k$ and level $U$ is the set of functions $f:\, G_f/U \times \mathfrak{H}_F \to \C$ such that
\begin{enumerate}
\item $f|_{k}\gamma = f$ for all $\gamma \in G_\Q$; and 
\item for all $x \in G_f$, the map $\left(f_x:\,\mathfrak{H}_F \to \C,\, z \mapsto f(x, z)\right)$ is holomorphic.
\end{enumerate}
By  Condition (1) (and the Koecher principle~\cite[sec.~2]{shimura78}), $f_x$ admits a $q$-expansion of the form 
$$f_x(z) = a_0(f_x) + \sum_{\mu \in F_{+}^{\times}} a_\mu e^{2\pi i \Tr_{F/\Q}(\mu z)},$$
where $$\Tr_{F/\Q}(\mu z) := \sum_{v \in J_F}\mu_v z_v.$$ 
We say that $f$ is a {\it cusp form} if, in addition to (1) and (2), we have 
\begin{enumerate}
\setcounter{enumi}{2}
\item $f_x|_{k}\gamma$ has no constant term, i.e. $a_0(f_x|_{k}\gamma) = 0$, for all $x \in G_f$ and $\gamma \in G_\Q$. 
\end{enumerate}
We denote the space of cusp forms by $S_{k}(U)$. 

Let $\gN$ be an integral ideal, we defined the compact opens:
\begin{align*}
U_0(\gN) &:= \left\{\begin{pmatrix} a &b \\ c& d \end{pmatrix} \in \GL_2(\widehat{\CO}_F):\,\, c = 0 \bmod \gN \right\};\\
U_1(\gN) &:= \left\{\begin{pmatrix} a &b \\ c& d \end{pmatrix} \in \GL_2(\widehat{\CO}_F):\,\, c = 0 \bmod \gN,\,\,d = 1 \bmod \gN\right\}. 
\end{align*}
For $U= U_0(\gN)$ or $U_1(\gN)$, the strong approximation theorem implies that 
$$G_\Q\backslash G_f/U \simeq \A_F/F^\times \R_{+}^{J_F} \widehat{\CO}_F^\times
\simeq \Cl^{+}(F),$$
where $\Cl^{+}(F)$ is the narrow class group of $F$. Let $\mathfrak{d}$ be the different ideal of $F$, and $\gc_i$, $i = 1, \ldots, h^{+}$, a complete set of 
representatives for the classes in $\Cl^{+}(F)$. For each $i$, let $x_i \in G_f$ be such that the id\`ele $\det (x_i)$ 
generates the ideal $\gc_i$, and define 
\begin{align*}
\Gamma_0(\gc_i, \gN) &:=\left\{\begin{pmatrix} a &b \\ c& d \end{pmatrix} \in \begin{pmatrix}\CO_F & \gc_i^{-1}\mathfrak{d}^{-1}\\
\gc_i\mathfrak{d}\gN & \CO_F \end{pmatrix}: ad - bc \in \CO_F^{\times + }\right\};\\
\Gamma_1(\gc_i, \gN) &:=\left\{\begin{pmatrix} a &b \\ c& d \end{pmatrix} \in \Gamma_0(\gc_i, \gN): d = 1 \bmod \gN \right\}.
\end{align*}
Let $\Gamma_i = \Gamma_0(\gc_i, \gN)$ or $\Gamma_1(\gc_i, \gN)$ according as to $U= U_0(\gN)$ or $U_1(\gN)$.
Then, we have a bijection
\begin{align*}
S_{k}(U) & \to \bigoplus_{i = 1} ^{h^+} S_{k}(\Gamma_i)\\
f& \mapsto (f_i, \ldots, f_{h^+}),
\end{align*}
where $f_i = f_{x_i}$, and $S_{k}(\Gamma_i)$ is the space of classical Hilbert cusp forms
of level $\Gamma_i$ and weight $k$ (see~\cite[sec.~2]{shimura78}). 
Let $\gm \subseteq \CO_F$ be an ideal; then there is a unique $1\le i \le h^+$ and $\mu \in F_{+}^{\times}$ such that
$\gm = \mu \gc_i^{-1}$. We set 
$$a_\gm(f) = a_\mu(f_i).$$ 
Since $f_i$ is invariant under $\CO_F^{+ \times}$, which acts as $z \mapsto \epsilon z$, we 
see that 
$$a_{\epsilon \mu}(f_i) = a_\mu(f_i).$$ 
So the quantity $a_\gm(f)$ is well-defined. We call it the {\it Fourier coefficient} of $f$ at $\gm$. 
 
\subsection{Petersson inner product} We have the measure $d\mu(z) = \prod_{v \in J_F}y_v^{-2} d x_v d y_v$.
We define the Petersson inner product on $S_{k}(\Gamma_i)$ by 
$$\langle f, g\rangle_{\Gamma_i} = \frac{1}{\mu(\Gamma_i\backslash \mathfrak{H}_F)}\int_{\Gamma_i\backslash \mathfrak{H}_F}\overline{f(z)}g(z)y^k d \mu(z).$$  
The Petersson inner product on $S_{k}(U)$ is given by
$$\langle f, g\rangle = \sum_{i=1}^{h^+} \langle f_i, g_i\rangle_{\Gamma_i}.$$

\subsection{Hecke operators}

For $x \in G_f$, we define the Hecke operator
\begin{eqnarray*}
[UxU]:\, S_{k}(U)&\to&S_{k}(U)\\
f&\mapsto&\sum f|_{k}x_i,
\end{eqnarray*} where $[UxU]=\coprod_{i}x_i U$. For every prime $\gp$ (resp. $\gp \nmid \gN$), we define
the Hecke operators
$$T_{\gp}:=\left[U\begin{pmatrix}\varpi_\gp&0\\0 &1\end{pmatrix} U\right]\,\,\mbox{\rm and}\,\,
S_{\gp}:=\left[U\begin{pmatrix}\varpi_{\gp}&0\\0 &\varpi_{\gp}\end{pmatrix} U\right],$$ 
where $\varpi_\gp$ is a uniformizer at $\gp$ (resp. $\gp\nmid\gN$). We define the Hecke algebra 
$\T_{k}(U)$ to be the $\Z$-subalgebra of $\mathrm{End}_{\C}(S_{k}(U))$ generated by the operators
$T_{\gp}$ for all primes $\gp$, and $S_{\gp}$ for all primes $\gp \nmid \gN$. We will simply write $\T$ if there is no confusion. 

\subsection{Cusp forms with characters}
Let $\chi:\, F^\times \backslash \A_F^\times \to \C^\times$ be a Hecke character of modulus $\gN$ and infinite type $(-n_0,\ldots,-n_0)$. 
We define the space of Hilbert cups forms of level $\gN$, weight $k$ and character $\chi$ by
$$S_{k}(\gN, \chi) := \left\{f \in S_{\uk}(U_1(\gN)) :\,S_\gp f = \chi(\gp) f \right\}.$$
We will simply write $S_{k}(\gN)$ when $\chi := \mathbf{1}$ is the trivial character. 
Then, we have the following decomposition (see~\cite[sec.~2]{shimura78}), which is compatible with the Hecke action:
$$S_{k}(U_1(\gN)) = \bigoplus_{\chi} S_{k}(\gN, \chi),$$
where $\chi$ runs over all Hecke character of modulus $\gN$ and infinite type $(-n_0,\ldots,-n_0)$. We will denote by $\T_{k}(\gN, \chi)$
the Hecke algebra acting on $S_{k}(\gN, \chi)$. Again, we will simply write $\T$ if there is no confusion.

\subsection{Eigenforms}
We say that $f\in S_{k}(\gN, \chi)$ is an {\it eigenform} if it is an eigenvector for all the operators $T_\gm, S_\gm \in \T_{k}(\gN, \chi)$.
In addition, we say that $f$ is {\it normalised} if $a_{(1)}(f) = 1$. In this case, there is a ring homomorphism 
$\lambda_f:\,\T_{k}(\gN, \chi) \to \C$ such that $T_{\gm} f = \lambda_f(T_{\gm}) f$ for all $\gm \subseteq \CO_F$.
By Shimura~\cite[Proposition~2.8]{shimura78} we have $\lambda_f(T_{\gm}) = a_{\gm}(f)$, and the field
$L_f = \Q(a_\gm(f): \gm \subseteq \CO_F)$ is a number field. It is either totally real or ${\rm CM}$. 

\subsection{Newforms} Let $\gM \mid \gN$ be divisible by the conductor of $\chi$, and $\gQ \mid \gN \gM^{-1}$. 
Then, by the Multiplicity One Theorem, there is a well-defined map 
\begin{align*}
\iota_{\gQ}:\,S_{k}(\gM, \chi) &\to S_{k}(\gN, \chi)\\
g &\mapsto g|\gQ,
\end{align*}
determined by setting $a_{\gm}(g|\gQ) = a_{\gm \gQ^{-1}}(g)$. The {\it old subspace} of level $\gN$, weight $k$ and character
$\chi$ is defined by
$$S_{k}(\gN, \chi)^{\old} = \sum_{\gM \mid \gN \atop \gQ \mid \gN \gM^{-1}} \mathrm{im}(\iota_\gQ).$$
This is stable under the action of $\T_{k}(\gN, \chi)$. We define the {\it new subspace} $S_{k}(\gN, \chi)^{\new}$ of level $\gN$, weight $k$ and 
character $\chi$ to be the orthogonal complement of $S_{k}(\gN, \chi)^{\old}$ under the Petersson inner product. So, we have
$$S_{k}(\gN, \chi) = S_{k}(\gN, \chi)^{\old} \oplus S_{k}(\gN, \chi)^{\new}.$$ 
We say that $f \in S_{k}(\gN, \chi)$ is a {\it newform} if it is a normalised eigenform which belongs to $S_{k}(\gN, \chi)^{\new}$. 
By~\cite[Theorem 5.2]{hid88}, we have a perfect pairing
\begin{align*}
\T_{k}(\gN, \chi) \times S_{k}(\gN, \chi) \to \C,\,\, (T, f) \mapsto a_{(1)}(f|T).
\end{align*}
Let $f \in S_{k}(\gN, \chi)^{\new}$ be a newform. Then, we have a $\Q$-algebra homomorphism
\begin{align*}
\lambda_f:\,\T_{k}(\gN, \chi)^{\new}& \to \Qbar\\
T_{\gm} &\mapsto a_\gm(f). 
\end{align*}
By considering the $\Qbar$-structure  $S_{k}(\gN, \chi; \Qbar)$ on $S_{k}(\gN, \chi)$, we
have an isomorphism of $\Qbar$-vector spaces
\begin{align*}
S_{k}(\gN, \chi;\Qbar)^{\new}&\to\mathrm{Hom}_{\Qbar}(\T_{k}(\gN, \chi;\Qbar)^{\new},\Qbar)\\
f&\mapsto \lambda_f.
\end{align*}
We also have a natural bijection
\begin{align*}
\mathrm{Hom}_{\overline{\Q}\mbox{\rm\tiny -alg}}(\T_{k}(\gN,\chi; \Qbar)^{\new},\overline{\Q})
&\stackrel{\sim}{\to} \mathrm{Spec}(\T_{k}(\gN,\chi; \Qbar)^{\new})\\
\lambda &\mapsto \ker(\lambda).
\end{align*}

\subsection{Hecke orbits}
Let $f \in S_{k}(\gN, \chi)^{\new}$ be a newform. Let $\tau:\,L_f \hookrightarrow \Qbar$ be a complex embedding. 
Then, by the Strong Multiplicity One Theorem~\cite{miy71} and \cite[Proposition 2.6]{shimura78}, there is a newform 
$f^\tau$ of level $\gN$, weight $k$ and character $\chi^\tau := \tau \circ \chi$ determined by its Hecke eigenvalues
$$a_\gp(f^\tau) := \tau(a_\gp(f)),\,\,\text{for all primes}\,\, \gp.$$
The {\it Hecke orbit} of the form $f$ is defined as the finite set
$$[f] := \{f^\tau:\, \tau \in \Hom(L_f, \Qbar)\}.$$

\subsection{Automorphic representations attached to Hilbert newforms}
Let $f\in S_{k}(\gN, \chi)^{\new}$ be a newform. Then there is an automorphic representation $\pi_f$ of $\GL_2(\A_F)$ 
attached to $f$, which by~\cite{fla79} admits a factorisation as a restricted tension product 
$$\pi_f=\bigotimes_v\pi_{f,v},$$
where $\pi_{f, v}$ is an admissible representation of $\GL_2(F_v)$ for all places $v$ of $F$.

\subsection{Galois representations attached to Hilbert newforms}

We let $G_F := \Gal(\Qbar/F)$ be the absolute Galois group of $F$.
For each prime $\gp$, we also let $D_\gp$  and $W_{F_\gp}$ be the decomposition and the Weil group at $\gp$,
respectively. 

Let $f \in S_{k}(\gN, \chi)^{\new }$ be a newform, and $\pi_f$ the associated automorphic representation.
The following theorem is the result of the work of many people~\cite{car86a, tay89, bggt14b}.
\begin{thm}\label{thm:galois-reps-for-hmf} Let $\ell$ be a rational prime, and $\lambda$ a prime of 
$L_f$ above $\ell$. Then, there exists a Galois representation 
$$\rho_{f,\lambda}: \Gal(\Qbar/F) \to \GL_2(\overline{L}_{f,\lambda}),$$ such that 
\begin{enumerate}
\item[(1)] For all $\gp\nmid \gN\ell$, the characteristic polynomial of $\Frob_\gp$ is determined by
$$\mathrm{Tr}(\rho_{f,\lambda}(\Frob_\gp)) = a_{\gp}(f)\,\,\text{and}\,\, \det(\rho_{f,\lambda}(\Frob_\gp)) = \chi(\gp)\N\!\gp^{k-1}.$$
\item[(2)] More generally, we have $\WD(\rho_{f,\lambda}|_{D_\gp})^{\rm F-ss} \simeq \rec_{F_\gp}(\pi_{f, \gp}\otimes |\det|^{-1/2})$. 
\item[(3)] For $\gp \mid \ell$, $\rho_{f,\lambda}|_{D_\gp}$ is de Rham with Hodge-Tate weights 
$$\mathrm{HT}_\tau(\rho_{f,\lambda}) = \{k, k - 1\},$$
for each embedding $\tau: F \hookrightarrow \overline{L}_{f, \lambda} \simeq \C$ lying over the place at $\gp$.
If $\pi_{f, \gp}$ is unramified then $\rho_{f, \lambda}|_{D_\gp}$ is crystalline. 
\item[(4)] If $c_v$ is a complex conjugation, then $\det(\rho_{f,\lambda}(c_v)) = -1$. 
\end{enumerate}
\end{thm}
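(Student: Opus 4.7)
The plan is to realise $\rho_{f,\lambda}$ inside the $\lambda$-adic \'etale cohomology of a suitable Shimura variety via the Jacquet--Langlands correspondence, and then read the local-global compatibility off from the geometry of that variety combined with $p$-adic Hodge theory. The construction splits into two cases, according to whether one can transfer $\pi_f$ to a quaternion algebra whose attached Shimura variety is a curve.

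In the first case, one assumes either that $[F:\Q]$ is odd or that $\pi_f$ is discrete series at some finite place. Then Jacquet--Langlands transfers $\pi_f$ to a representation $\pi_f^D$ on $D^\times(\A_F)$ for a quaternion algebra $D/F$ ramified at all but one archimedean place (and at an auxiliary finite place in the second scenario). The attached Shimura variety $X_U$ is a curve, and I would carve $\rho_{f,\lambda}$ out as the $\pi_f^{D,\infty}$-isotypic component of $H^1_{\text{\'et}}(X_U \times_F \overline{F},\, \mathcal{F}_\lambda)$, where $\mathcal{F}_\lambda$ is the local system of $L_{f,\lambda}$-modules cut out by the weight-$k$ automorphic datum. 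The Eichler--Shimura congruence relation at primes $\gp\nmid\gN\ell$ yields (1); Carayol's analysis of the integral model of $X_U$ at ramified finite $\gp$ matches $\WD(\rho_{f,\lambda}|_{D_\gp})^{\text{F-ss}}$ with $\rec_{F_\gp}(\pi_{f,\gp}\otimes|\det|^{-1/2})$, giving (2); Faltings' comparison theorem applied to $X_U$ produces de Rham-ness at $\gp\mid\ell$ with Hodge--Tate weights $\{k,k-1\}$ read from the Hodge filtration on $\mathcal{F}_\lambda$, and smooth proper reduction at unramified $\gp$ upgrades this to crystallinity, giving (3); finally, sitting inside the $H^1$ of a curve over $F$ forces $\det\rho_{f,\lambda}(c_v)=-1$, giving (4).

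The remaining case is $[F:\Q]$ even with $\pi_f$ everywhere principal series at finite places, where no such direct geometric construction exists. Here I would follow Taylor's approach: construct a sequence of newforms $g_n$ congruent to $f$ modulo $\lambda^n$ that fall under the first case (typically by level-raising at an auxiliary prime to introduce a Steinberg place), apply the previous construction to each $g_n$, and pass to a limit via pseudo-representations to obtain $\rho_{f,\lambda}$. This is where I expect the main obstacle: one must ensure that the limiting pseudo-representation lifts to a genuine two-dimensional Galois representation, and then propagate the local-global compatibility statements (2)--(4) through the congruence to primes of bad reduction and to primes above $\ell$. The continuity and density inputs that make this possible are precisely the content of~\cite{tay89} and the refinements in~\cite{bggt14b}.
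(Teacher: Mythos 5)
The paper does not prove this theorem; it simply recalls it as a known result and cites Carayol, Taylor, and Barnet--Lamb--Gee--Geraghty--Taylor. Your sketch is an accurate high-level account of exactly the construction carried out in those references (Carayol's Shimura-curve realisation in the quaternionic case, Taylor's congruence-and-pseudo-representation patching in the remaining case, and the later refinements for full local-global compatibility), so your approach coincides with the one the paper implicitly appeals to.
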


\section{\bf Base change}\label{sec:bc}
In this section, we recall the main conjecture regarding non-solvable base change for $\GL_2$,
and refer to~\cite{lang80} for more details. 

Let $F$ be as before and assume that $E$ is a subfield of $F$ such that $F/E$ is Galois. 
Assume that $\gN$ is stable under the action of $\Gal(F/E)$.  

Since $U$ is Galois stable, the group $\Gal(E/F)$ acts on $G_f/U \times \mathfrak{H}_F$ by
\begin{align*}
\sigma:\, G_f/U \times \mathfrak{H}_F &\to G_f/U \times \mathfrak{H}_F\\
(x, z) &\mapsto (x^\sigma, z^\sigma).
\end{align*}
This induces an action on the space $S_{k}(U)$. It also acts on $\T_{k}(U)$ by 
sending $T_{\gp}$ to $T_{\sigma(\gp)}$, and $S_{\gp}$ to $S_{\sigma(\gp)}$. Via the
surjection
\begin{align*}
G_f \times G_\infty &\to G_f/U \times \mathfrak{H}_F\\
(g_f, g_\infty) &\mapsto(g_f U, g_\infty(\sqrt{-1},\ldots,\sqrt{-1})),
\end{align*}
this induces an action of $\Gal(F/E)$ on the set of automorphic representations of $\GL_2(\A_F)$
of level $U_1(\gN)$ and weight $k$. 

Let $f \in S_{k}(\gN, \chi)^{\new}$ be a newform, and $\sigma \in G$. Then, there is an automorphic representation 
${}^\sigma\!\pi_f = \pi_f \circ \sigma$ of level $\gN$, weight $k$ and character ${}^\sigma\!\chi := \chi \circ \sigma$. 
Let  ${}^\sigma\!f \in S_{k}(\gN, {}^\sigma\!\chi)$ be the newform such that ${}^\sigma\!f$ is a new vector in ${}^\sigma\!\pi_f$. 
By the Strong Multiplicity One Theorem~\cite{miy71}, ${}^\sigma\!f$ is uniquely determined by the relation
$$a_{\gp}({}^\sigma\!f) := a_{\sigma(\gp)}(f),$$
for all primes $\gp$. 
We say that an automorphic representation $\pi$ is a {\it base change} if $\pi \circ \sigma \simeq \pi$ for
all $\sigma \in  G$. We say that $f$ is a {\it base change} if $\pi_f$ is a base change. This is equivalent to
saying that ${}^\sigma\!f = f$. By the Multiplicity One Theorem, this is also equivalent to saying that 
$$a_{\sigma(\gp)}(f) := a_{\gp}(f),$$
for all $\sigma \in G$ and almost all primes $\gp$. We note that, among other things, this will imply that
$\chi\circ\sigma =\chi$ for all $\sigma \in G$. In other words, $\chi$ must factor through the norm map $\N_{F/E}:\, F \to E$. 

Let $\rho:\,\Gal(\Qbar/F) \to \GL_2(\Qbar_\ell)$ be an $\ell$-adic representation. For $\sigma \in G$,
let $\tilde{\sigma} \in \Gal(\Qbar/E)$ be a lift, and set
$$\rho^\sigma(\delta) = \rho(\tilde{\sigma} \delta \tilde{\sigma}^{-1}).$$
Then, $\rho^\sigma$ is well-defined and depends only on $\sigma$. We call it the $\sigma$-conjugate of $\rho$. 

\begin{conj}[Base Change~\cite{lang80}]\label{conj:bc} Let $f$ be a Hilbert newform of $\Gal(F/E)$-invariant level $\gN$, weight $k$ and character $\chi$.
Then, the followings are equivalent:
\begin{enumerate}[(a)]
\item For all $\sigma \in G$, we have ${}^\sigma\!\pi_f \simeq \pi_f$;
\item For all $\sigma \in G$, we have ${}^\sigma\!f = f$;
\item For all prime $\lambda$ in $L_f$ and $\sigma \in G$, we have $\rho_{f, \lambda}^\sigma = \rho_{f,\lambda}$;
\item There exists a Hilbert newform $\hat{f}$ of level $U_1(\gN')$ and weight $k$ over $E$ such that
$$\rho_{\hat{f}, \lambda'}|_{\Gal(\Qbar/F)} = \rho_{f,\lambda}$$
for all primes $\lambda'$ in $L_{\hat{f}}$ and $\lambda$ in $L_f$ above $\lambda'$;
\item There exists a Hilbert newform $\hat{f}$ of level $U_1(\gN')$ and weight $k$ over $E$ such that
$$L(f, s) = \prod_{\eta \in \widehat{G}} L(\hat{f}\otimes (\eta \circ \Art_F), s),$$
where $\Art_F:\, F^\times\backslash\A_F^\times \to G_F^{\rm ab}$ is the Global Artin reciprocity map. 
\end{enumerate}
(The level $\gN'$ depends on the relative discriminant $\mathfrak{D}_{F/E}$ and the level $\gN$, see Remark~\ref{rem:bc-level}.)
\end{conj}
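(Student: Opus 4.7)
The plan is to prove the cycle (a) $\Leftrightarrow$ (b) $\Leftrightarrow$ (c), together with (d) $\Rightarrow$ (a) and (d) $\Leftrightarrow$ (e) by standard methods, so that the genuinely deep content is concentrated in the implication (c) $\Rightarrow$ (d).

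The easy block starts with (a) $\Leftrightarrow$ (b): by construction ${}^\sigma\!f$ is a new vector in ${}^\sigma\!\pi_f$, and the Multiplicity One Theorem for $\GL_2(\A_F)$ identifies the newform with the isomorphism class of the representation, giving the equivalence at once. For (b) $\Leftrightarrow$ (c), one unwinds the definition of the $\sigma$-conjugate: for any lift $\tilde\sigma \in \Gal(\Qbar/E)$ and any $\gp \nmid \gN\ell$ such that $\sigma(\gp)$ is also unramified for $\rho_{f,\lambda}$, one has $\tilde\sigma\,\Frob_\gp\,\tilde\sigma^{-1} = \Frob_{\sigma(\gp)}$ modulo inertia, so
\[ \Tr\bigl(\rho_{f,\lambda}^{\sigma}(\Frob_\gp)\bigr) \;=\; a_{\sigma(\gp)}(f) \;=\; a_\gp({}^\sigma\!f), \]
and Brauer--Nesbitt combined with Chebotarev density turns $\rho_{f,\lambda}^{\sigma} \simeq \rho_{f,\lambda}$ into ${}^\sigma\!f = f$.

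The implications touching (d) are formal once (d) is granted. For (d) $\Rightarrow$ (a), I exploit that $G_F$ is normal in $G_E$: conjugation by any lift $\tilde\sigma$ preserves $\rho_{\hat f,\lambda'}|_{G_F} = \rho_{f,\lambda}$, so $\rho_{f,\lambda}^{\sigma} = \rho_{f,\lambda}$ and (c) $\Rightarrow$ (a) already proved closes the loop. The equivalence (d) $\Leftrightarrow$ (e) is inductivity of $L$-functions applied to the Mackey isomorphism
\[ \Ind_{G_F}^{G_E}\bigl(\rho_{\hat f,\lambda'}|_{G_F}\bigr) \;\simeq\; \rho_{\hat f,\lambda'} \otimes \Ind_{G_F}^{G_E}\mathbf{1} \;\simeq\; \bigoplus_{\eta \in \widehat G} \rho_{\hat f,\lambda'} \otimes \tilde\eta, \]
where $\tilde\eta$ is the inflation of $\eta$ to $G_E$; Theorem~\ref{thm:galois-reps-for-hmf}(2) and local Langlands for $\GL_2$ match the Galois $L$-factors with the automorphic ones on the nose. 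Along the way one verifies that $\chi$ descends to a Hecke character $\hat\chi$ of $E$ with $\chi = \hat\chi \circ \N_{F/E}$, which is automatic from $\chi \circ \sigma = \chi$ by Hilbert~90 for characters.

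The main obstacle, and the true content of the conjecture, is (c) $\Rightarrow$ (d): this is non-solvable base change for $\GL_2$. Given the $G$-equivariant compatible system $\{\rho_{f,\lambda}\}$ of $G_F$-representations, one must first extend each $\rho_{f,\lambda}$ to a representation $\hat\rho_\lambda$ of $G_E$; the obstruction sits in an $H^2(G,-)$ group (equivalently, a Brauer-type class for $\PGL_2$) and can be neutralised up to a controlled character twist. One must then prove the extended $\hat\rho_\lambda$ is automorphic, i.e.\ attached to a Hilbert newform $\hat f$ over $E$. For solvable $F/E$ this step is the cyclic base change of Langlands--Arthur--Clozel proved via trace-formula comparison, but for non-solvable $F/E$ it is an open instance of Langlands functoriality, which is exactly why the statement is posed as a conjecture; the rest of the note proceeds under its assumption.
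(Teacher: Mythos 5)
The statement you are asked to ``prove'' is a \emph{conjecture}, and the paper offers no proof of it: Remark~\ref{rem:non-solvable-bc} simply records that the assertion is a theorem of Langlands when $F/E$ is cyclic, with partial results in non-solvable cases due to Hida and Dieulefait, and the rest of the paper proceeds \emph{assuming} Conjecture~\ref{conj:bc}. Your proposal correctly identifies this state of affairs. Separating the formal implications — (a) $\Leftrightarrow$ (b) by Strong Multiplicity One, (b) $\Leftrightarrow$ (c) via $\tilde\sigma\Frob_\gp\tilde\sigma^{-1} = \Frob_{\sigma(\gp)}$ combined with Chebotarev and Brauer--Nesbitt, (d) $\Rightarrow$ (a) via normality of $G_F$ in $G_E$, (d) $\Leftrightarrow$ (e) by inductivity of $L$-functions — from the genuinely deep one, (c) $\Rightarrow$ (d), is exactly the right way to understand why the paper poses this as a conjecture. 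Your framing of (c) $\Rightarrow$ (d) in terms of first extending the compatible system to $G_E$ (with an $H^2$ obstruction) and then proving automorphy of the extension, the latter being an open instance of functoriality for non-solvable $F/E$, matches the standard narrative.

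One inaccuracy worth flagging in your treatment of (d) $\Leftrightarrow$ (e): the identity $\Ind_{G_F}^{G_E}\mathbf{1} \simeq \bigoplus_{\eta\in\widehat G}\tilde\eta$ holds only when $G = \Gal(F/E)$ is abelian; in general $\Ind_{G_F}^{G_E}\mathbf{1}$ is the regular representation $\bigoplus_\rho \rho^{\oplus\dim\rho}$ over irreducibles $\rho$ of $G$, and the factorization of $L(f,s)$ would involve factors of Rankin--Selberg type $L(\hat f \times \rho, s)$ with multiplicities $\dim\rho$. (The product over $\widehat G$ in (e) as written suffers from the same restriction, so this is as much a limitation of the statement as of your argument.) Also, for the converse direction (e) $\Rightarrow$ (d), knowing a factorization of $L$-functions does not \emph{immediately} yield the restriction identity of Galois representations; one needs an additional Strong Multiplicity One argument to pass from the equality of Euler products to the isomorphism $\rho_{\hat f,\lambda'}|_{G_F} \simeq \rho_{f,\lambda}$. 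None of this affects your main and correct conclusion that the real content of the conjecture is concentrated in (c) $\Rightarrow$ (d).
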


\begin{rem}\rm One can reformulate Statements (a) and (b) of Conjecture~\ref{conj:bc} by saying that 
$\mathrm{Spec}(\T_{k}(\gN,\chi; \Qbar)^{\new})$ is a $G$-set whose fixed points correspond to newforms 
that are base change.
\end{rem}

\begin{rem}\label{rem:non-solvable-bc}\rm 
We recall that Conjecture~\ref{conj:bc} is true when $F/E$ is a cyclic extension (see~\cite{lang80}). In the non-solvable case, there have
been some progress for $F/\Q$ totally real thanks to Hida~\cite{hida09} and Dieulefait~\cite{die12}. 
\end{rem}

\begin{rem}\label{rem:bc-level}\rm The level $U_1(\gN')$ of the form $\hat{f}$ in Conjecture~\ref{conj:bc} can be determined explicitly using
the local-global compatibility conditions in Theorem~\ref{thm:galois-reps-for-hmf}. We note that, for $F/\Q$ cyclic of prime degree, 
there is earlier work of Saito~\cite[Theorem 4.5]{sai79} which gives $U_1(\gN')$. 
\end{rem}

\section{\bf Base change and Hecke orbits}\label{sec:bc-and-hecke-comp}

We keep the notation of Section~\ref{sec:bc}. In particular, $\gN$ is an integral ideal which is $\Gal(F/E)$-invariant,
and $S_k(\gN)$ is the space of cusp forms of level $\gN$, weight $k$ and trivial character.

Let $\mathscr{S}$ be the set of Hecke orbits of the newforms in $S_k(\gN)$. Let $f \in S_k(\gN)$ be a newform, 
$\tau \in \Hom(L_f, \Qbar)$ and $\sigma \in G$. Then, we have that $L_{{}^\sigma\!f} = L_{f}$, and that
\begin{align*}
a_{\gp}({}^\sigma\!(f^\tau)) = a_{\sigma(\gp)}(f^\tau) = \tau(a_{\sigma(\gp)}(f))= \tau(a_{\gp}({}^\sigma\!f))=a_{\gp}(({}^\sigma\!f)^\tau).
\end{align*}
This means that, by setting $\sigma\cdot[f] := [{}^\sigma\!f]$, we get a well-defined action of $G$ on $\mathscr{S}$. We can write
$\mathscr{S}$ as a disjoint union
$$\mathscr{S} = \coprod_{L}\mathscr{S}_L,$$
where $L$ runs over all fields of coefficients and $\mathscr{S}_L := \left\{[f] : L_f = L \right\}$. 
Our goal is to understand the orbits of this action on this union. To this end, we start with the following lemma, which is somewhat
straightforward. 

\begin{lem}\label{lem:non-bc} Assume that Conjecture~\ref{conj:bc} is true. Let $f \in S_{k}(\gN)$ be a newform with field of coefficients $L$
such that the $G$-orbit of $[f]$ is a singleton.
\begin{enumerate}[(a)]
\item There is a group homomorphism
\begin{align*}
\phi:\, G &\longrightarrow \Aut(L)\\
\sigma &\longmapsto \tau
\end{align*}
where, for every $\sigma \in G$, $\phi(\sigma):=\tau$ is the unique element in $\Aut(L)$ such that ${}^{\sigma}\!f = f^\tau$. 
\item If $f$ is not a base change from $E$ then $\phi$ is non-trivial.
\item If $f$ is not a base change from any intermediate field $E'/E$, then the map $\phi$ is injective. 
\end{enumerate}
\end{lem}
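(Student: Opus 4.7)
The hypothesis that the $G$-orbit of $[f]$ is a singleton means that for each $\sigma\in G$ there exists some $\tau\in\Hom(L,\Qbar)$ with ${}^{\sigma}\!f=f^{\tau}$. My plan for (a) is to pin down this $\tau$, show it lies in $\Aut(L)$, and verify that $\sigma\mapsto\tau$ defines a homomorphism; parts (b) and (c) will then reduce to direct invocations of the Base Change Conjecture.

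For (a) I would run three checks, in order. First, \emph{uniqueness of $\tau$}: if $f^{\tau}=f^{\tau'}$ then comparing Hecke eigenvalues gives $\tau(a_{\gp}(f))=\tau'(a_{\gp}(f))$ for every prime $\gp$, and since the $a_{\gp}(f)$ generate $L=L_{f}$, this forces $\tau=\tau'$. Second, $\tau(L)=L$ (so that $\tau\in\Aut(L)$): a direct comparison of coefficient fields gives
\[
\tau(L)=\tau(L_{f})=L_{f^{\tau}}=L_{{}^{\sigma}\!f}=L,
\]
the last equality because ${}^{\sigma}\!f$ has the same set of Fourier coefficients as $f$, merely permuted by $\gp\mapsto\sigma(\gp)$. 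Third, \emph{homomorphism}: from $a_{\gp}({}^{\sigma}\!f)=a_{\sigma(\gp)}(f)$ one gets ${}^{\sigma_{1}\sigma_{2}}\!f={}^{\sigma_{2}}({}^{\sigma_{1}}\!f)$, and combining this with the compatibility ${}^{\sigma}(f^{\tau})=({}^{\sigma}\!f)^{\tau}$ established just before the lemma, writing $\tau_{i}:=\phi(\sigma_{i})$ one computes
\[
{}^{\sigma_{1}\sigma_{2}}\!f={}^{\sigma_{2}}({}^{\sigma_{1}}\!f)={}^{\sigma_{2}}(f^{\tau_{1}})=({}^{\sigma_{2}}\!f)^{\tau_{1}}=(f^{\tau_{2}})^{\tau_{1}}=f^{\tau_{1}\tau_{2}};
\]
uniqueness now identifies $\phi(\sigma_{1}\sigma_{2})$ with $\tau_{1}\tau_{2}=\phi(\sigma_{1})\phi(\sigma_{2})$.

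Parts (b) and (c) then follow directly from Conjecture~\ref{conj:bc}. For (b), if $\phi$ were trivial then ${}^{\sigma}\!f=f^{\mathrm{id}}=f$ for every $\sigma\in G$, and the equivalence (b)$\Leftrightarrow$(d) of Conjecture~\ref{conj:bc} applied to $F/E$ would exhibit $f$ as a base change from $E$, contrary to hypothesis. For (c), suppose $\phi$ is not injective and let $H:=\ker\phi$, a nontrivial normal subgroup of $G$; setting $E':=F^{H}$ one has $E\subseteq E'\subsetneq F$, the extension $F/E'$ is Galois with group $H$, and by construction ${}^{\sigma}\!f=f$ for every $\sigma\in H=\Gal(F/E')$. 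Applying Conjecture~\ref{conj:bc} to the extension $F/E'$ then produces a descent of $f$ to the intermediate field $E'$, contradicting the hypothesis of (c). The only substantive bookkeeping is the identity $\tau(L)=L$ in part (a); the rest rests on Strong Multiplicity One and the assumed Conjecture~\ref{conj:bc}, so I do not anticipate any real obstacle.
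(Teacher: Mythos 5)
Your proof is correct and follows essentially the same route as the paper: uniqueness of $\tau$ via Strong Multiplicity One, the identification $\tau(L)=L_{f^\tau}=L_{{}^\sigma\!f}=L$ to land in $\Aut(L)$, and the homomorphism check via the compatibility of inner and exterior twists. The only cosmetic differences are that you carry out the homomorphism computation at the level of forms rather than Fourier coefficients, and in part (c) you pass to the fixed field of the full kernel $\ker\phi$ rather than of a single nontrivial $\sigma\in\ker\phi$; both are equivalent to what the paper does.
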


\begin{proof} (a) Let $\sigma \in G$. Since $[f]$ is unique in its $G$-orbit, we have 
$[{}^\sigma\!f] = [f]$.  Hence, there exists $\tau \in \Hom(L, \Qbar)$ such that 
${}^{\sigma}\!f = f^\tau$. So, we have $$a_{\sigma(\gp)}(f) = \tau( a_{\gp}(f)),$$
for all primes $\gp$. By the Strong Multiplicity One Theorem~\cite{miy71}, the element $\tau$ is well-defined and uniquely determined.
Furthermore, since $L_{{}^\sigma\!f} = L_f = L$, and $L$ is generated by the Hecke eigenvalues of $f$, we have that $\tau \in \Aut(L)$.
So, setting $\phi(\sigma) := \tau$, we get a well-defined map $\phi:\, G \to \Aut(L)$. 

\medskip
To prove that $\phi$ is a group homomorphism, let $\phi(\sigma_1) = \tau_1$ and $\phi(\sigma_2) = \tau_2$. Then, for all primes $\gp$, we have 
$$a_{(\sigma_1\sigma_2)(\gp)}(f) = a_{\sigma_1(\sigma_2(\gp))}(f)= \tau_1(a_{\sigma_2(\gp)}(f))=\tau_1\left(\tau_2(a_{\gp}(f))\right)=(\tau_1\tau_2)(a_{\gp}(f)).$$
So we have ${}^{\sigma_1\sigma_2}\!f = f^{\tau_1\tau_2}$, and hence $\phi(\sigma_1\sigma_2)= \phi(\sigma_1)\phi(\sigma_2)$ by the Multiplicity One Theorem. 

\medskip
(b) Since $f$ is not a base change from $E$, there exists $\sigma \in G$ such that ${}^\sigma\!f \neq f$ and $[{}^\sigma\!f] = [f]$.
Therefore, $\phi(\sigma) \neq 1$, hence it must be non-trivial. 
 
\medskip
(c) Assume that there were $\sigma \ne 1$ such that $\phi(\sigma)=1$. This would imply that
${}^{\sigma}\!f = f$. So $f$ would be a base change from the subfield $E' = F^{\langle \sigma \rangle}$, which would be a contradiction. 
So $\phi$ must be injective. 
\end{proof}

\begin{rem}\label{rem:gal-grp}\rm Letting $K = L^\Delta$ in Lemma~\ref{lem:non-bc}, where $\Delta := \mathrm{im}(\phi)$, we see that $\Gal(L/K) = \Delta$.
So, in other words, Lemma~\ref{lem:non-bc} (c) implies that we have an injection $\Gal(F/E) \hookrightarrow \Gal(L/K)$. As we will see 
later, in all our examples where $\Gal(F/E)$ is solvable, this injection is in fact an isomorphism. But there is no reason for this to persist
forever as the degree of the field $L$ becomes larger. 
\end{rem}

\begin{thm}\label{thm:non-bc} Assume Conjecture~\ref{conj:bc} is true.
Let $f \in S_{k}(\gN)$ be a newform with field of coefficients $L$, $G' := \Stab_{G}([f])$ and $E' := F^{G'}$, so that 
$\Gal(F/E') = G'$.
\begin{enumerate}[(a)]
\item There exists a group homomorphism
\begin{align*}
\phi:\, G' &\longrightarrow \Aut(L)\\
\sigma &\longmapsto \tau
\end{align*}
where, for every $\sigma \in G'$, $\phi(\sigma):=\tau$ is the unique element in $\Aut(L)$ such that ${}^{\sigma}\!f = f^\tau$. 
\item If $G'$ is non-trivial, and $f$ is not a base change from $E'$, then $\phi$ is non-trivial. 
\item If $G'$ is non-trivial, and $f$ is not a base change from any intermediate field $F'/E'$, then $\phi$ is an injection. 
\end{enumerate}
\end{thm}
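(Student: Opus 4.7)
The plan is to reduce the theorem directly to Lemma~\ref{lem:non-bc} by applying that lemma to the sub-extension $F/E'$ in place of $F/E$. The setup carries over without friction: since $G' \subseteq G$ and $\gN$ is $G$-invariant, $\gN$ is automatically $G'$-invariant; and since $F/E$ is Galois with group $G$, the intermediate extension $F/E' = F/F^{G'}$ is Galois with Galois group exactly $G'$. Hence the action of $G'$ on newforms of level $\gN$ and weight $k$ introduced in Section~\ref{sec:bc-and-hecke-comp} is defined, and Conjecture~\ref{conj:bc} is available for the extension $F/E'$ (so we can invoke Lemma~\ref{lem:non-bc} over this extension).

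The crucial extra hypothesis of Lemma~\ref{lem:non-bc}, that the orbit of $[f]$ be a singleton, is guaranteed by construction: $G'$ is defined as $\Stab_G([f])$, so by definition $\sigma \cdot [f] = [f]$ for every $\sigma \in G'$, which is exactly the singleton-orbit condition for the $G'$-action. Thus the hypotheses of Lemma~\ref{lem:non-bc} are met when it is applied to $F/E'$ and the newform $f$.

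With these observations in place, parts (a), (b), (c) of the theorem follow from the corresponding parts of Lemma~\ref{lem:non-bc} applied to $F/E'$. For (a), one directly obtains the homomorphism $\phi\colon G' \to \Aut(L)$ with $\phi(\sigma)=\tau$ determined by ${}^{\sigma}\!f = f^{\tau}$. For (b), the non-base-change hypothesis from $E'$ is precisely the hypothesis of Lemma~\ref{lem:non-bc}(b) relative to $F/E'$ (where $G'$ non-trivial ensures the statement is not vacuous), and so $\phi$ is non-trivial. For (c), the hypothesis that $f$ is not a base change from any intermediate field $F'/E'$ is exactly the hypothesis of Lemma~\ref{lem:non-bc}(c) for $F/E'$, giving injectivity of $\phi$.

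There is no real obstacle beyond verifying that the three ingredients (Galois structure of $F/E'$, invariance of $\gN$ under $G'$, and the singleton-orbit property) transfer to the restricted setting. The theorem is essentially a transplanting of Lemma~\ref{lem:non-bc} from $F/E$ to $F/E'$, with $E' = F^{\Stab_G([f])}$ serving as the smallest subfield of $F$ over which $[f]$ is visibly stable under the Galois action.
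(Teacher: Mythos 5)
Your proposal is correct and matches the paper's own proof, which also applies Lemma~\ref{lem:non-bc} to the sub-extension $F/E'$ after observing that $G' = \Stab_G([f])$ makes the $G'$-orbit of $[f]$ a singleton. You have simply spelled out the routine verifications (Galois structure of $F/E'$, $G'$-invariance of $\gN$) that the paper leaves implicit.
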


\begin{proof} The $G'$-orbit of $[f]$ is a singleton, so we apply Lemma~\ref{lem:non-bc} relative to the extension $F/E'$. 
\end{proof}

\begin{cor}\label{cor:non-bc-cyclic} Let $F/E$ be a cyclic extension, and $f \in S_{k}(\gN)$ be a newform with field of coefficients $L$.
Let  $\Gal(F/E) = \langle \sigma \rangle$, $\Stab_{G}([f]) = \langle \sigma^s \rangle$ and $E'= F^{\langle \sigma^s\rangle}$.
\begin{enumerate}[(a)]
\item There exists a group homomorphism
\begin{align*}
\Gal(F/E') &\longrightarrow \Aut(L)\\
\sigma^s &\longmapsto \tau
\end{align*}
where $\tau \in \Aut(L)$ is the unique element such that ${}^{\sigma^s}\!f = f^\tau$. 
\item  If $G'$ is non-trivial, and $f$ is not a base change from $E'$, then $\phi$ is non-trivial.
\item If $G'$ is non-trivial, and $f$ is not a base change from any intermediate field $F'/E'$, then $\phi$ is an injection.
\end{enumerate}
\end{cor}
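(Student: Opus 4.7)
The plan is to deduce this corollary as a direct specialization of Theorem~\ref{thm:non-bc} to cyclic extensions, with essentially no new work. Since $G = \Gal(F/E) = \langle\sigma\rangle$ is cyclic, every subgroup of $G$ is cyclic, so the stabilizer $G' := \Stab_G([f])$ must be of the form $\langle \sigma^s\rangle$ for a unique divisor $s$ of $|G|$ (taking $s$ minimal with $\sigma^s \cdot [f] = [f]$). Its fixed field $E' := F^{G'} = F^{\langle\sigma^s\rangle}$ is then the unique intermediate extension with $\Gal(F/E') = \langle\sigma^s\rangle$, matching the hypotheses of Theorem~\ref{thm:non-bc} verbatim.

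With this setup in hand, part (a) is immediate: Theorem~\ref{thm:non-bc}(a) produces a homomorphism $\phi:G'\to\Aut(L)$, and under the identification $G' = \langle\sigma^s\rangle$ its value on the generator $\sigma^s$ is exactly the unique $\tau\in\Aut(L)$ with ${}^{\sigma^s}\!f = f^\tau$, the uniqueness being a consequence of the Strong Multiplicity One Theorem together with the fact that $L$ is generated by the Hecke eigenvalues of $f$. Parts (b) and (c) then transfer verbatim from Theorem~\ref{thm:non-bc}(b) and (c), once one notes that for a cyclic $G' = \langle\sigma^s\rangle$ the intermediate fields $F'$ with $E' \subseteq F' \subseteq F$ correspond precisely to the subgroups $\langle \sigma^{sm}\rangle$ for $m \mid |G'|$, so the non-base-change hypothesis in (c) can equivalently be phrased in terms of these cyclic subextensions.

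There is no real obstacle here: the corollary is a pure dictionary translation between the language of stabilizers/fixed fields (used in Theorem~\ref{thm:non-bc}) and the language of powers of a fixed generator (more convenient in the cyclic case). The only minor subtlety is confirming the uniqueness of $s$ and of $E'$, but both follow from the usual Galois correspondence for cyclic groups. It is also worth noting that, by Remark~\ref{rem:non-solvable-bc}, Conjecture~\ref{conj:bc} is unconditionally known in the cyclic case, so this corollary is in fact an unconditional statement even though Theorem~\ref{thm:non-bc} is stated conditionally.
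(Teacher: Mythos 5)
Your proposal is correct and matches the paper's (very terse) proof in substance: the paper simply observes that base change is known unconditionally for cyclic extensions, so Theorem~\ref{thm:non-bc} applies directly with $G' = \langle\sigma^s\rangle$ and $E' = F^{G'}$. Your more detailed unpacking of the Galois dictionary is accurate but is just making explicit what the paper leaves implicit.
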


\begin{proof} Since $F/E$ is cyclic, Conjecture~\ref{conj:bc} is true in that case. 
\end{proof}

There is a wide range of combinatorial results that one could derive from Theorem~\ref{thm:non-bc}. 
We only state the following corollaries as an illustration. 

\begin{cor}\label{cor:const-count1} Assume Conjecture~\ref{conj:bc} is true. Let $f \in S_{k}(\gN)$ be a newform whose Hecke constituent is
the unique constituent of dimension $d_f := [L_f : \Q]$. If $d_f < |G|$ then there exists an intermediate field
$E'/E$ such that $f$ is a base change from $E'$. 
\end{cor}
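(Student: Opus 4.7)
The plan is to reduce to the setup of Theorem~\ref{thm:non-bc} and then exploit the mismatch between $|G|$ and $|\Aut(L_f)|$.

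First, I would observe that the $G$-action on $\mathscr{S}$ preserves the size of each Hecke orbit. Indeed, for any $\sigma \in G$ the Hecke eigenvalues of ${}^\sigma\!f$ are just a relabelling of those of $f$, so $L_{{}^\sigma\!f} = L_f$, and hence the orbit $[{}^\sigma\!f]$ has the same cardinality $d_f = [L_f:\Q]$ as $[f]$. By the uniqueness hypothesis that $[f]$ is the only Hecke orbit of dimension $d_f$ in $S_k(\gN)$, we must have $\sigma \cdot [f] = [{}^\sigma\!f] = [f]$ for every $\sigma \in G$. Thus, in the notation of Theorem~\ref{thm:non-bc}, $G' := \Stab_G([f]) = G$ and $E' = F^{G'} = E$.

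Next, I would invoke Theorem~\ref{thm:non-bc}(a) (valid because we are assuming Conjecture~\ref{conj:bc}) to obtain the group homomorphism
\[
\phi:\, G \longrightarrow \Aut(L_f),\qquad \sigma \longmapsto \tau,
\]
where $\tau$ is characterised by ${}^\sigma\!f = f^\tau$. Since $|\Aut(L_f)| \le [L_f:\Q] = d_f < |G|$, the map $\phi$ cannot be injective. Moreover, the hypothesis $d_f < |G|$ (together with $d_f \ge 1$) forces $|G| \ge 2$, so $G' = G$ is non-trivial.

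Finally, I would apply the contrapositive of Theorem~\ref{thm:non-bc}(c): since $G'$ is non-trivial but $\phi$ fails to be injective, $f$ must be a base change from some intermediate field strictly smaller than $F$. Concretely, choosing $\sigma \in G \setminus \{1\}$ with $\phi(\sigma) = 1$ gives ${}^\sigma\!f = f$, so $f$ descends to the subfield $E'' := F^{\langle \sigma \rangle}$, which satisfies $E \subseteq E'' \subsetneq F$, as required. There is no real obstacle here: the whole argument is essentially a two-line bookkeeping deduction from Theorem~\ref{thm:non-bc}, the only substantive ingredient being the observation that the $G$-action preserves orbit sizes, which pins down $\Stab_G([f]) = G$.
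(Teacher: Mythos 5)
Your proof is correct and takes essentially the same route as the paper's: pin down $\Stab_G([f]) = G$ from the uniqueness hypothesis, apply Theorem~\ref{thm:non-bc}, observe $|\Aut(L_f)| \le d_f < |G|$ to see $\phi$ cannot inject, and then use the contrapositive of part (c) to produce an intermediate field from which $f$ is a base change. Your version is slightly more explicit than the paper's (spelling out that $G$-conjugation preserves orbit sizes, checking $G$ is non-trivial, and exhibiting $E'' = F^{\langle\sigma\rangle}$ concretely), but the substance is identical.
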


\begin{proof} By assumption, we have $\Stab_{G}([f]) = G$. Assume that there is no intermediate
field $E'$ such that $f$ is a base change from $E'$. Then, the map $\phi$ is injective by Theorem~\ref{thm:non-bc}. 
This implies that $d_f  \ge |G|$, which is a contradiction.
\end{proof}

\begin{cor}\label{cor:const-count2} Assume Conjecture~\ref{conj:bc} is true. Let $f \in S_{k}(\gN)$ be a newform with field of coefficients $L_f$.
Let $s_f := |\Stab_{G}([f])|$ and $n_f$ be the number of Hecke constituents of dimension $d_f := [L_f:\Q]$.
Then, we have $n_f \ge |G|/s_f$. In particular, when $\Stab_{G}([f]) =\{1\}$, there are at least $|G|$
Hecke constituents of dimension $d_f$ (including the ones in the Hecke orbit of $f$). 
\end{cor}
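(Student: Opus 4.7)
The plan is a direct application of the orbit--stabilizer theorem to the $G$-action on $\mathscr{S}$ defined in Section~\ref{sec:bc-and-hecke-comp}.

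First I would consider the $G$-orbit of $[f]$ inside $\mathscr{S}$. By orbit--stabilizer, this orbit has cardinality $|G|/|\Stab_G([f])| = |G|/s_f$. Every element of the orbit is of the form $[{}^\sigma\!f]$ for some $\sigma \in G$, and these are, by definition, pairwise distinct Hecke orbits (i.e.\ distinct Hecke constituents in $\mathrm{Spec}(\T_{k}(\gN;\Qbar)^{\new})$).

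Next I would verify that each of these $|G|/s_f$ Hecke constituents has the same dimension $d_f$. This follows from the computation already made in the paper: for every $\sigma \in G$, $L_{{}^\sigma\!f} = L_f$, since
\begin{equation*}
L_{{}^\sigma\!f} = \Q(a_\gp({}^\sigma\!f) : \gp) = \Q(a_{\sigma(\gp)}(f) : \gp) = L_f.
\end{equation*}
Hence the dimension of the Hecke constituent attached to $[{}^\sigma\!f]$ equals $[L_{{}^\sigma\!f}:\Q] = [L_f:\Q] = d_f$.

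Combining the two points, the $|G|/s_f$ distinct Hecke orbits comprising the $G$-orbit of $[f]$ all contribute Hecke constituents of dimension $d_f$, so $n_f \ge |G|/s_f$. The particular case $\Stab_{G}([f]) = \{1\}$ is then immediate by substituting $s_f = 1$. There is no real obstacle here; the argument is purely combinatorial once one observes that the $G$-action preserves the field of coefficients, which is already recorded in the discussion preceding Lemma~\ref{lem:non-bc}.
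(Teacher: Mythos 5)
Your argument is correct and is almost certainly the intended one; the paper states Corollary~\ref{cor:const-count2} without proof, evidently regarding it (like Corollary~\ref{cor:const-count3}) as an immediate consequence of the orbit--stabilizer theorem applied to the $G$-action on $\mathscr{S}$, together with the observation $L_{{}^\sigma\!f} = L_f$ recorded just before Lemma~\ref{lem:non-bc}. One small remark: as written, your proof makes no use of Conjecture~\ref{conj:bc}. The $G$-action on $\mathscr{S}$, the orbit--stabilizer count, and the equality of coefficient fields are all unconditional, so the hypothesis in the statement is inherited from the surrounding corollaries (where it genuinely is used via Theorem~\ref{thm:non-bc}) rather than being logically necessary here. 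It would be worth saying so explicitly, both because it clarifies the logical dependencies and because it shows this particular counting bound holds in full generality.
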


\begin{cor}\label{cor:const-count3}
Assume Conjecture~\ref{conj:bc} is true. Let $f \in S_{k}(\gN)$ be a newform with field of coefficients $L_f$.
Let  $n_f$ be the number of Hecke constituents of dimension $d_f := [L_f:\Q]$.
Assume that $d_f$ is coprime with $|G|$. Then, either $f$ is a base change from $E$ and $n_f =1$, or $f$ is not a base
change from any proper subfield of $E'/E$ and $n_f = |G|$. 
\end{cor}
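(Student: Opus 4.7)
The plan is to exploit Theorem~\ref{thm:non-bc} together with the coprimality hypothesis to show that the natural homomorphism $\phi$ attached to the stabilizer of $[f]$ must be trivial, which then collapses the stabilizer to one of two extremes matching the dichotomy.

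Set $G' := \Stab_G([f])$ and $E' := F^{G'}$, and apply Theorem~\ref{thm:non-bc}(a) to $f$ with respect to the extension $F/E'$ to obtain the homomorphism $\phi : G' \to \Aut(L_f)$ characterised by ${}^\sigma\!f = f^{\phi(\sigma)}$. The central observation is that for any number field $L$ of degree $n$ over $\Q$, the order $|\Aut(L)|$ divides $n$ (an automorphism is determined by where it sends a primitive element, and the Galois orbit of such an element has size dividing $n$); applied to $L = L_f$, this yields $|\phi(G')| \mid d_f$. Since $|\phi(G')|$ also divides $|G'|$, which in turn divides $|G|$, the coprimality assumption $\gcd(d_f,|G|)=1$ forces $|\phi(G')|=1$. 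Hence $\phi$ is trivial, ${}^\sigma\!f = f$ for every $\sigma \in G'$, and $f$ is a base change from $E'$.

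The two branches of the dichotomy now correspond to the extremes $G'=G$ and $G'=\{1\}$. If $G'=G$, then $E'=E$ and $f$ is a base change from $E$; the $G$-orbit of $[f]$ is a singleton, and $n_f = 1$ (uniqueness coming from Conjecture~\ref{conj:bc} together with the Multiplicity One Theorem, which preclude other dimension-$d_f$ Hecke constituents arising from distinct base changes of the descended form). If on the other hand $f$ is not a base change from any proper subfield of $F/E$, then the triviality of $\phi$ forces $E'=F$, i.e., $G' = \{1\}$; orbit--stabilizer then gives a $G$-orbit of $[f]$ of size $|G|$, so $n_f = |G|$ by Corollary~\ref{cor:const-count2}.

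The main obstacle is handling the intermediate possibility $\{1\} \subsetneq G' \subsetneq G$, in which $f$ would be a base change from a proper intermediate field strictly between $E$ and $F$. Such a case is compatible with neither branch as stated — the first requires base change from $E$ itself, while the second requires no base change from any proper subfield — so it is precisely excluded by the hypothesis phrasing; any $f$ satisfying the assumptions of the corollary is therefore forced into one of the two extremes.
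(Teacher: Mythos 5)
Your key step --- using $|\Aut(L_f)| \mid d_f$ together with the coprimality of $d_f$ and $|G|$ to force $\phi|_{G'}$ to be trivial, hence that $f$ is a base change from $E' = F^{G'}$ --- is correct and is the right mechanism. However, your treatment of the intermediate case $\{1\} \subsetneq G' \subsetneq G$ is circular: you assert that it is ``excluded by the hypothesis phrasing,'' but the dichotomy is precisely what the corollary \emph{claims} and therefore must be established, not assumed. Observing that a third case fits neither branch only says it would be a counterexample. What is actually needed is an argument that the coprimality hypothesis forces $G' = \Stab_G([f])$ to be trivial or all of $G$, and none is supplied; nor does one follow from Theorem~\ref{thm:non-bc} directly, since the coprimality only tells you $\phi|_{G'}$ is trivial, which is compatible with \emph{any} $G'$. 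This is the central gap in the argument.

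The two $n_f$-computations are also not established. In the first branch ($G' = G$) you invoke Conjecture~\ref{conj:bc} and Multiplicity One to deduce $n_f = 1$, but $n_f$ as defined counts \emph{every} Hecke constituent of dimension $d_f$, not only those related to the descended form $\hat f$; nothing in your argument rules out a second newform $g$ with $[L_g : \Q] = d_f$ whose Hecke orbit is distinct from $[f]$. In the second branch ($G' = \{1\}$) you cite Corollary~\ref{cor:const-count2} for $n_f = |G|$, but that corollary gives only the lower bound $n_f \ge |G|/s_f$; the matching upper bound $n_f \le |G|$ is not addressed and does not follow from the cited results.
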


\begin{rem}\rm In practice, it can be very hard to decide whether a newform  $f$ is a base change or not. 
The thrust of Corollaries~\ref{cor:const-count1}, \ref{cor:const-count2} and~\ref{cor:const-count3} is that they 
allow us to do so by using purely combinatorial arguments sometimes. 
\end{rem}

\begin{rem}\rm An immediate consequence of Corollaries~\ref{cor:const-count2} and~\ref{cor:const-count3}  is that a naive generalisation of
the Maeda conjecture~\cite[Conjecture 1.2]{hm97} to totally real number fields will not work. Any 
proper generalisation must be sensitive to the the action of $\Gal(F/E)$ on Hecke orbits of newforms. 
\end{rem}

\section{\bf Abelian varieties potentially of $\GL_2$-type}\label{sec:potential-gl2-av}

Theorem~\ref{thm:non-bc} and its corollaries have natural implications for the theory of descent of abelian varieties.
Indeed, let $f$ be a newform of weight $2$ and level $\gN$. Assume that $f$ is not a base change from $E$, and
that the map $\phi: \Gal(F/E)\to \Aut(L)$ is an injection. Let $K := L^\Delta$ and $g := [L : K]$, where $\Delta = \mathrm{im}(\phi)$. 
Further assume that there exists an abelian variety $A_f$ which satisfies the Eichler-Shimura construction for $f$ and recall that, 
we have 
$$a_{\sigma(\gp)}(f) = \tau(a_{\gp}(f)),\,\,\text{for all primes}\,\, \gp\,\,\text{and}\,\,\sigma \in \Gal(F/E),$$ 
where $\tau = \phi(\sigma)$. Consider the family of $\lambda$-adic representations
$$\rho_{f, \lambda}:\, \Gal(\Qbar/F) \to \GL_2(L_{f, \lambda})$$
attached to $A_f$, where $\lambda$ runs over all primes in $L_f$. 
The identity above implies that $A_f$ is isogenous to all its Galois conjugates, and that the induced representations
$$\Ind_{E}^{F}\left(\rho_{f, \lambda}\right):\, \Gal(\Qbar/E) \to \GSp_{2g}(L_{f, \lambda}),$$
are irreducible, and are in fact defined over $K$. This suggests that the isogeny class of the abelian variety $A_f$
descends to that of an abelian variety $B/E$ such that $\End_E(B)\otimes \Q \simeq K$. We note that $A_f$
cannot be an $E$-variety, in the terminology of~\cite{ribet04, pyle04} or~\cite{guit10}, since $B$ is not of
$\GL_2$-type. So that motivates the following definition.

\begin{defn} Let $A/E$ be an abelian variety. We say that $A$ is {\rm potentially of $\GL_2$-type} if there 
exists an extension $F/E$ such that $A\times_E F$ is of $\GL_2$-type. 
\end{defn}

We see that an abelian variety that is of $\GL_2$-type is clearly potentially of $\GL_2$-type. We also see 
that being potentially of $\GL_2$-type is slightly stronger than simply acquiring extra endomorphism 
after base change. 
 
Let $A$ be an abelian variety defined over $F$, and $[A]$ its isogeny class. For any $\sigma \in G_E$, set 
$$\sigma\cdot [A] := [A^\sigma],$$
where $A^\sigma$ is the Galois conjugate of $A$ by $\sigma$. This defines an action of $G_E$ on isogeny classes of abelian 
varieties defined over $F$ since $A \sim A'$ implies that $A^\sigma \sim {A'}^\sigma$. We note that, since $A$ is defined over $F$,
this action factor through $G = \Gal(F/E)$. If $A$ is of $\GL_2$-type, then $A^\sigma$ is also of $\GL_2$-type and 
$\End_F(A)\otimes \Q = \End_F(A^\sigma)\otimes \Q$. Let $\Stab_G([A])$ be the stabiliser of the isogeny class of $A$. 
Then, $\Stab_G([A])$ is trivial if and only if $A$ is not isogenous to any of its Galois conjugate. Similarly $\Stab_G([A]) = G$ means that $A$ is 
isogenous to all its Galois conjugates. 

We recall the following well-known lemma.
\begin{lem}\label{lem:galois-action}
Let $A/F$ be an abelian variety of $\GL_2$-type, and $L = \End_F(A)\otimes \Q$. Assume that $A$ is isogenous to all its Galois conjugates. Then, there exists
a group homomorphism $\phi:\,G \to \Aut(L)$.
\end{lem}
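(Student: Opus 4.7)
The plan is to define $\phi(\sigma)$ by conjugating Galois-twisted endomorphisms along an $F$-rational isogeny. For each $\sigma \in G$, the hypothesis supplies an isogeny $\mu_\sigma: A^\sigma \to A$ defined over $F$, which we regard as an element of $\Hom_F(A^\sigma, A)\otimes \Q$ with inverse in $\Hom_F(A, A^\sigma)\otimes \Q$. Writing $\lambda^\sigma \in \End_F(A^\sigma)\otimes \Q$ for the Galois conjugate of $\lambda \in L$, I would set
\[
\phi(\sigma)(\lambda) := \mu_\sigma \circ \lambda^\sigma \circ \mu_\sigma^{-1} \in \End_F(A)\otimes \Q = L.
\]
This is manifestly a ring endomorphism of $L$, and since $\mu_\sigma$ is an isogeny and $L$ is a field, it is in fact an automorphism.

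The main point is to verify independence of the choice of $\mu_\sigma$. Any two such isogenies differ by a unit $u \in L^\times$, because $\Hom_F(A^\sigma, A)\otimes \Q$ is free of rank one as a left module over $\End_F(A)\otimes \Q = L$. This is precisely where the $\GL_2$-type hypothesis enters, since it forces $L$ to be commutative; because $L$ is commutative, conjugation by $u$ acts trivially on $L$, and therefore $\phi(\sigma) \in \Aut(L)$ depends only on $\sigma$ and not on the chosen representative.

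For the homomorphism property $\phi(\sigma_1\sigma_2) = \phi(\sigma_1)\phi(\sigma_2)$, I would exploit the freedom just established. Given fixed $\mu_{\sigma_1}$ and $\mu_{\sigma_2}$, one obtains a candidate isogeny $A^{\sigma_1\sigma_2} \to A$ by composing $\mu_{\sigma_1}$ with an appropriate Galois translate of $\mu_{\sigma_2}$. Substituting into the defining formula, and using that Galois conjugation commutes with composition of morphisms, a short calculation yields the desired identity; by the independence step, it then holds for any choice of the isogenies $\mu_\sigma$.

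I expect the delicate step to be precisely the well-definedness, since it is the sole place where the $\GL_2$-type assumption is used essentially. Without the commutativity of $L$, the assignment $\sigma \mapsto \phi(\sigma)$ depends genuinely on the chosen isogenies, and one recovers at best a cocycle valued in $L^\times$-conjugation classes rather than a bona fide group homomorphism into $\Aut(L)$.
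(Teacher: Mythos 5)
Your proposal is correct and follows the same route as the paper: define $\phi(\sigma)(\lambda) = \mu_\sigma \circ \lambda^\sigma \circ \mu_\sigma^{-1}$ and deduce well-definedness from the commutativity of $L$, which is where $\GL_2$-type is used. The paper's proof is slightly terser (it just substitutes a second system of isogenies and cancels, and leaves the homomorphism step to the reader), but the underlying ideas, including the observation that two choices of isogeny differ by an element of $L^\times$ acting trivially by conjugation, are identical to yours.
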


\begin{proof} Let $\{\mu_\sigma:\, A^\sigma \to A\}_{\sigma \in G_E}$ be a system of isogenies. For each $\sigma \in G_E$, we define
$$\tau_\sigma(\alpha) := \mu_\sigma \circ \alpha^\sigma \circ \mu_\sigma^{-1},\,\,\alpha \in L.$$
Let $\{\mu_\sigma':\, A^\sigma \to A\}_{\sigma \in G_E}$ be another system of isogenies. Then, we have
$$\mu_\sigma' \circ \alpha^\sigma \circ {\mu_\sigma'}^{-1} = ( \mu_\sigma'\circ \mu_\sigma^{-1}) \circ (\mu_\sigma \circ \alpha^\sigma \circ \mu_\sigma^{-1}) \circ 
(\mu_\sigma \circ {\mu_\sigma'}^{-1}) = \mu_\sigma \circ \alpha^\sigma \circ \mu_\sigma^{-1}$$
since $L$ is commutative. Therefore, $\tau_\sigma$ is well-defined and independent of the choice of the system of isogenies. It is not hard to see that 
$(\phi:\,G_E \to \Aut(L),\,\,\sigma \mapsto \tau_\sigma)$ defines a group action, hence is a homomorphism. Since $A$ is defined over $F$, which is Galois over $E$, 
the map $\phi$ factors through $G$. This concludes the lemma. 
\end{proof}
We recall that an abelian variety $A/F$ of $\GL_2$-type, which is isogenous to all its $\Gal(F/E)$-conjugates, is called an 
{\it $E$-variety} if the homomorphism $\phi$ in Lemma~\ref{lem:galois-action} is trivial. In this case, Ribet~\cite[Theorem 1.2]{ribet94} shows that 
there exists a $2$-extension $\widetilde{F}$ of $F$, and a system of isogenies $\{\mu_\sigma:\, A^\sigma \to A\}_{\sigma \in G}$ defined over $\widetilde{F}$. 

\begin{prop}\label{prop:arise-from} Let $F/E$ be a Galois extension, and $A$ a non ${\rm CM}$ abelian variety of $\GL_2$-type defined over $F$.
Assume that $A$ is an $E$-variety with a system of isogenies $\{\mu_\sigma:\, A^\sigma \to A\}_{\sigma \in G}$ defined over $F$. 
Then, there is an abelian variety $B$ of $\GL_2$-type defined over $E$ such that $A$ is a simple factor of $B\times_{E}F$. 
\end{prop}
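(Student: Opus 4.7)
The plan is to realize $B$ as a simple isogeny factor of the Weil restriction
$$
C \;:=\; \Res_{F/E}(A),
$$
an abelian variety over $E$ of dimension $[F:E]\cdot \dim A$ with base change $C\times_E F \cong \prod_{\sigma\in G} A^\sigma$. Since each $\mu_\sigma \colon A^\sigma \to A$ is defined over $F$, they assemble to an $F$-isogeny $\Phi \colon C\times_E F \sim A^{|G|}$. The permutation action of $G$ on $\prod_\sigma A^\sigma$ transports through $\Phi$ to an action on $A^{|G|}$ by twisted permutation matrices with entries in $L := \End_F^0(A)$, and the identity
$$
c(\sigma,\tau) \;:=\; \mu_{\sigma\tau}^{-1} \circ \mu_\sigma \circ \mu_\tau^\sigma \;\in\; L^\times
$$
defines a $2$-cocycle (here the $E$-variety hypothesis, i.e., triviality of the map $\phi$ in Lemma~\ref{lem:galois-action}, is used to canonically identify each $\End_F^0(A^{\sigma\tau})$ with $L$). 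Altogether, this yields an embedding of the twisted group algebra $L^c[G]$ into $\End_E^0(C)$.

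Since $L$ has characteristic zero, $L^c[G]$ is a finite-dimensional semisimple $L$-algebra (Maschke), so it decomposes as $L^c[G] \cong \prod_i R_i$ with simple factors $R_i$ and central primitive idempotents $e_i$. Applying these idempotents in $\End_E^0(C)$ produces an $E$-isogeny decomposition $C \sim \prod_i e_i C$. Base change gives $(e_i C)\times_E F \sim A^{m_i}$ for multiplicities $m_i \ge 0$ satisfying $\sum_i m_i \dim_L R_i = |G|$, so at least one index $i_0$ has $m_{i_0} \ge 1$. Take $B$ to be an $E$-simple factor of $e_{i_0} C$. Since $A$ is isogenous to all its $G$-conjugates, $B\times_E F$ is $F$-isotypic and a power of $A$, so $A$ appears as a simple factor of $B\times_E F$.

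The main obstacle is verifying that $B$ is of $\GL_2$-type, that is, that $\End_E^0(B)$ contains a number field $K$ with $[K:\Q] = \dim B$. The idea is to build $K$ as the compositum of (the descended image of) $L$ with a maximal commutative subfield of the simple summand $R_{i_0}$ of $L^c[G]$ acting on $B$. The non-CM hypothesis is crucial here: it guarantees that $L = \End_F^0(A)$ is a field rather than a non-trivial division algebra, which keeps the dimension bookkeeping tractable and prevents spurious extra endomorphisms after base change. Matching $[K:\Q]$ with $m_{i_0}\cdot \dim A$ then reduces to the structure theory of central simple algebras over subfields of $L$ and, in the cyclic case, to the description of $L^c[G]$ as a cyclic algebra governed by the Brauer class of $c$; this is the step that most cleanly exhibits $B$ as the sought-after abelian variety of $\GL_2$-type descending the isogeny class of $A$.
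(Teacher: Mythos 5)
The paper's own proof is a single-line citation to Pyle's Proposition~4.5 and Ribet's Theorem~6.1, and your outline reproduces exactly the mechanism in those references: form the Weil restriction $\Res_{F/E}(A)$, use the $F$-rational isogenies $\mu_\sigma$ to identify $\Res_{F/E}(A)\times_E F$ with $A^{|G|}$, record the $2$-cocycle $c(\sigma,\tau)=\mu_{\sigma\tau}^{-1}\circ\mu_\sigma\circ\mu_\tau^\sigma$ (valued in $L^\times$ with trivial $G$-action precisely because $\phi$ is trivial, i.e.\ the $E$-variety hypothesis), embed the twisted group algebra $L^c[G]$ into $\End_E^0(\Res_{F/E}A)$, split by central idempotents, and pick a simple $E$-factor. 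So the route is the same as the one the paper points to.

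Two points are underdeveloped, though. First, the role of the non-CM hypothesis is misattributed: $L=\End_F^0(A)$ is a number field by the definition of $\GL_2$-type, independently of CM. What non-CM actually controls is $\End_{\overline F}^0(A)$ (hence the size of $\Hom_F^0(A^\sigma,A)$), and that is what lets one identify $\End_E^0(\Res_{F/E}A)$ \emph{exactly} with $L^c[G]$; without that identification the central idempotents of $L^c[G]$ need not cut out the $E$-isotypic pieces of $\Res_{F/E}(A)$, and the decomposition bookkeeping does not go through. Second, the $\GL_2$-type verification for $B$, which you flag as ``the main obstacle,'' is not a side remark but the entire content of Pyle's Proposition~4.5: one must produce a number field $K\subset\End_E^0(B)$ with $[K:\Q]=\dim B$, and your proposed ``maximal commutative subfield of $R_{i_0}$'' sits inside $\End_E^0(e_{i_0}C)\cong M_{n_{i_0}}(\End_E^0(B))$, not obviously inside $\End_E^0(B)$; matching $[K:\Q]$ with $\dim B = m_{i_0}\dim A$ requires comparing against $\dim_L R_{i_0}$ and the Brauer class of $c$ in $H^2(G,L^\times)$. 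You name the right objects, but the dimension count is not carried out, so the proposal stands as a correct outline of the cited Pyle--Ribet argument rather than a self-contained proof.
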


\begin{proof} This is an adaptation of the proof of \cite[Proposition 4.5]{pyle04} to arbitrary number fields (see also Ribet~\cite[Theorem 6.1]{ribet04}).
\end{proof}

Let $A/F$ be of $\GL_2$-type, and set $L = \End_F(A)\otimes \Q$. Let $\lambda$ be a prime in $L$ and consider the Galois representation on the
$\lambda$-adic Tate module of $A$
$$\rho_{A,\lambda}:\,\Gal(\Qbar/F) \to \GL_2(L_{\lambda}).$$
Let $\gp$ be a prime in $F$, and set 
$$a_{\gp} = \Tr(\rho_{A, \lambda}(\Frob_\gp)) \in L.$$
Ribet~\cite[Proposition 3.3]{ribet04} shows that $L$ is generated by the $a_\gp$. The content of the following result is that Galois action on isogeny 
classes of abelian varieties of $\GL_2$-type should mirror that on Hecke orbits of Hilbert newforms. 

\begin{thm}\label{thm:potential-gl2} Let $F/E$ be a Galois extension, and $A$ a non {\rm CM} abelian variety of $\GL_2$-type defined over $F$ with
$\End_F(A)\otimes \Q = L$. Assume that $A$ is isogenous to all its Galois conjugates. 
\begin{enumerate}[(a)]
\item There exists a group homomorphism 
$\phi: G \to \Aut(L)$ such that $$a_{\sigma(\gp)} = \tau(a_\gp)$$ for all primes $\gp$ and all $\sigma \in G$, where $\tau:=\phi(\sigma)$. 
\item  If $A$ is not an $E$-variety, then $\phi$ is non-trivial.
\item If $A$ is not an $E'$-variety for any proper subfield $E'/E$, then $\phi$ is an injection. 
\end{enumerate}
\end{thm}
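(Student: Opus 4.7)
The plan: part (a) follows from Lemma \ref{lem:galois-action} combined with a semilinear intertwining argument on the $\lambda$-adic Galois representations. Parts (b) and (c) are then formal consequences of the definition of an $E'$-variety.

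For (a), Lemma \ref{lem:galois-action} already furnishes the homomorphism $\phi\colon G\to\Aut(L)$, with $\phi(\sigma)=\tau_\sigma$ defined by $\tau_\sigma(\alpha)=\mu_\sigma\alpha^\sigma\mu_\sigma^{-1}$ for a chosen system $\{\mu_\sigma\colon A^\sigma\to A\}_{\sigma\in G}$. To verify the Frobenius identity $a_{\sigma(\gp)}=\tau_\sigma(a_\gp)$, fix a prime $\lambda$ of $L$ and a lift $\tilde\sigma\in G_E$ of $\sigma$, and write $V_\lambda(A)$ for the rational $\lambda$-adic Tate module. The tautological Galois-conjugation map induces $\tilde\sigma_*\colon V_\lambda(A)\to V_\lambda(A^\sigma)$ satisfying $\rho_{A^\sigma,\lambda}(g)\circ\tilde\sigma_*=\tilde\sigma_*\circ\rho_{A,\lambda}(\tilde\sigma^{-1}g\tilde\sigma)$ for $g\in G_F$, and sending the $\alpha$-action on the source to the $\alpha^\sigma$-action on the target. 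The Tate-module map $\mu_{\sigma,*}\colon V_\lambda(A^\sigma)\to V_\lambda(A)$ is Galois-equivariant for the absolute Galois group of any field of definition of $\mu_\sigma$, and intertwines the $\alpha^\sigma$-action on the source with the $\tau_\sigma(\alpha)$-action on the target thanks to the defining relation $\mu_\sigma\alpha^\sigma=\tau_\sigma(\alpha)\mu_\sigma$. Hence $M:=\mu_{\sigma,*}\circ\tilde\sigma_*$ is a $\tau_\sigma$-semilinear automorphism of $V_\lambda(A)$ satisfying
\[
M\circ\rho_{A,\lambda}(\Frob_\gp)\circ M^{-1}=\rho_{A,\lambda}(\Frob_{\sigma(\gp)})
\]
for $\gp$ of good reduction in an appropriate unramified set. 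A direct calculation in any $L_\lambda$-basis shows that the $L_\lambda$-trace of an $L_\lambda$-linear operator conjugated by a $\tau_\sigma$-semilinear map is multiplied by $\tau_\sigma$, yielding $a_{\sigma(\gp)}=\tau_\sigma(a_\gp)=\phi(\sigma)(a_\gp)$.

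For (b), an $E$-variety is by definition one for which $\phi$ is trivial, so the contrapositive is immediate. For (c), argue contrapositively: assume $H:=\ker\phi$ is nontrivial and set $E':=F^H$, so $E\subseteq E'\subsetneq F$. Then $A$ remains isogenous to all its $\Gal(F/E')=H$-conjugates, and the homomorphism $H\to\Aut(L)$ attached by Lemma \ref{lem:galois-action} to the extension $F/E'$ is precisely $\phi|_H=1$. Hence $A$ is an $E'$-variety for a proper intermediate field $E'$ of $F/E$, contradicting the hypothesis.

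The main technical obstacle is the semilinear trace step in (a): one must carefully track conventions so that $M$ is genuinely $\tau_\sigma$-semilinear (rather than linear or $\tau_\sigma^{-1}$-semilinear), and verify via a basis calculation that conjugation by such an $M$ multiplies $L_\lambda$-valued traces by $\tau_\sigma$. The ancillary technicalities—passing from $\Frob_\gp$ to $\Frob_{\sigma(\gp)}$ via $\tilde\sigma\Frob_\gp\tilde\sigma^{-1}$, and restricting to primes unramified in a field of definition of the $\mu_\sigma$—are routine once the semilinearity bookkeeping is correct.
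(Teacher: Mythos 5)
Your proposal is correct and follows essentially the same route as the paper: part (a) by combining Lemma~\ref{lem:galois-action} with the Tate-module intertwining argument (the paper phrases your ``$\tau_\sigma$-semilinear $M$'' as a pair of $L$-linear maps for suitably twisted $L$-actions, working with $V_\ell$ over $L\otimes\Q_\ell$ rather than $V_\lambda$ over $L_\lambda$, which is the same thing componentwise), and parts (b)--(c) by unwinding the definition of an $E'$-variety exactly as you do.
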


\begin{proof} (a) By Lemma~\ref{lem:galois-action}, there is a homomorphism $\phi:\,G \to \Aut(L)$. We only need to show that
$\phi$ is compatible with the action of $\Gal(F/E)$. Let $\ell$ be a rational prime, and $V_\ell(A) = T_\ell(A)\otimes \Q_\ell$ the 
$\ell$-adic Tate module attached to $A$. This is a $2$-dimensional $L \otimes \Q_\ell$ vector space, and we let 
$$\rho_{A,\ell}:\, \Gal(\Qbar/F) \to \GL(V_\ell(A))$$ be the corresponding Galois representation. For $\sigma \in G_E$, the isogeny
$\mu_\sigma$ induces a $\Q_\ell[\Gal(\Qbar/F)]$-module isomorphism $\mu_\sigma:\, V_\ell(A^\sigma) \stackrel{\simeq}{\to} V_\ell(A)$. 
For all $\alpha \in L$, and $x \in V_\ell(A^\sigma)$, we have
$$\mu_\sigma(\alpha^\sigma x) = (\mu_\sigma \circ \alpha^\sigma \circ \mu_\sigma^{-1})(\mu_\sigma(x)) = \tau(\alpha) \mu_\sigma(x),$$
where $\tau = \phi(\sigma)$. This means that $\mu_\sigma$ becomes an $L$-linear isomorphism by letting $L$ acts on $V_\ell(A^\sigma)$
(resp. $V_\ell(A)$) via $\sigma$ (resp. $\tau$.) Similarly, by definition, there is an isomorphism $V_\ell(A) \stackrel{\simeq}{\to} V_\ell(A^\sigma)$ 
induced by the map $A(\overline{F}) \to A^\sigma(\overline{F})$ sending $x$ to $x^\sigma$, which is $L$-linear if we let $L$ acts on $V_\ell(A^\sigma)$ 
via $\sigma$. From this, we get the diagram below, which is compatible with the action of $\Gal(\Qbar/F)$ as indicated. 
\begin{eqnarray*}
\begin{tikzcd}
V_\ell(A)\arrow[r, "\simeq"]\arrow[loop below, "\alpha"]\arrow[loop above, "\Frob_\gp"] & 
V_\ell(A^\sigma) \arrow[r, "\mu_\sigma"', "\simeq"] \arrow[loop above, "\Frob_{\sigma(\gp)}"] \arrow[loop below, "\alpha^\sigma"]& 
V_\ell(A)\arrow[loop above, "\Frob_{\sigma(\gp)}"] \arrow[loop below, "\tau(\alpha)"]
\end{tikzcd}
\end{eqnarray*}
This implies that
$$\Tr(\rho_{A, \ell}(\Frob_{\sigma(\gp)})) = \tau(\Tr(\rho_{A, \ell}(\Frob_\gp))),$$
which is the stated identity. 
 
(b) By definition $A$ is an $E$-variety if and only if $\phi$ is trivial. 

(c) For every proper subfield $E'/E$, $A$ is not an $E'$-variety. So, $\phi|_{\Gal(F/E')}$ is non-trivial. Therefore, $\phi$ must be injection. 
\end{proof}

The following result is a generalisation of~\cite[Theorem 4.2]{cd17} for cyclic extensions $F/E$. 

\begin{thm}\label{thm:galois-descent}
Let $F/E$ be a Galois extension, and $A$ a non {\rm CM} abelian variety of $\GL_2$-type defined over $F$ such that $\End_F(A)\otimes \Q = L$ is totally real. 
Assume that is $A$ is isogenous to all its Galois conjugates, and that $A$ is not an $E'$-variety for any proper subfield $E'/E$ of $F$.
Then, there exists an abelian variety $B$ defined over $E$, potentially of $\GL_2$-type, such that 
$\End_E(B)\otimes \Q = K$ and  $A \sim B \times_E F$, where $K = L^\Delta$, $\Delta =\mathrm{im}(\phi)$. 
\end{thm}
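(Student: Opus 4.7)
The strategy is to use Theorem \ref{thm:potential-gl2} to extract the symmetry $\phi: G \hookrightarrow \Aut(L)$, observe that the isogeny system witnessing this symmetry is automatically $K$-linear, and then invoke a $K$-multiplication variant of the Ribet--Pyle descent machinery recorded in Proposition \ref{prop:arise-from}.

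First I would apply Theorem \ref{thm:potential-gl2}(c) directly. Since $A$ is non-CM, of $\GL_2$-type, isogenous to all its $G$-conjugates, and not an $E'$-variety for any proper subfield $E'/E$, the theorem produces an injective homomorphism
$$\phi: G \hookrightarrow \Aut(L), \qquad \sigma \mapsto \tau_\sigma, \qquad \text{with } a_{\sigma(\gp)} = \tau_\sigma(a_\gp).$$
Setting $\Delta := \mathrm{im}(\phi)$ and $K := L^\Delta$, one has $\Gal(L/K) = \Delta$, the degree $[L:K] = |G|$, and $\dim A = [L:\Q] = |G| \cdot [K:\Q]$.

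Next I would fix a system of $F$-isogenies $\{\mu_\sigma: A^\sigma \to A\}_{\sigma \in G}$, where passage to at most a quadratic extension of $F$ in the sense of Ribet~\cite{ribet94} may be needed to realize them over $F$. The central observation is that because $\tau_\sigma = \phi(\sigma) \in \Delta$ fixes $K$ pointwise, the identity $\tau_\sigma(\alpha) = \mu_\sigma \circ \alpha^\sigma \circ \mu_\sigma^{-1}$ of Lemma~\ref{lem:galois-action} gives $\mu_\sigma \circ \alpha^\sigma = \alpha \circ \mu_\sigma$ for every $\alpha \in K$. Thus each $\mu_\sigma$ is $K$-linear. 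Equivalently, when $A$ is viewed as an abelian variety over $F$ equipped only with its $K$-multiplication, the pair $(A, \{\mu_\sigma\})$ satisfies the defining condition of an $E$-variety in the sense of Ribet--Pyle: the induced map $G \to \Aut(K)$ is trivial.

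I would then invoke the natural variant of Proposition \ref{prop:arise-from} in which $\GL_2$-type is replaced by the weaker hypothesis of a fixed $K$-multiplication. The descent procedure produces $B/E$ with $K \hookrightarrow \End_E(B)\otimes\Q$ and an $F$-isogeny realizing $A$ as a simple factor of $B \times_E F$; dimension counting ($\dim B = \dim A = |G|\cdot [K:\Q]$) forces $B \times_E F \sim A$ entirely. Equality $\End_E(B) \otimes \Q = K$ then follows from the minimality hypothesis on $A$: any extra $E$-endomorphism of $B$ base-changes to an element of $\End_F(A) \otimes \Q = L$ that commutes with every $\mu_\sigma$, hence is fixed by $\phi(G) = \Delta$ and so lies in $L^\Delta = K$. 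Finally $B$ is potentially of $\GL_2$-type by definition, since $B \times_E F \sim A$ is.

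The main obstacle is the extension of Proposition \ref{prop:arise-from} to this $K$-multiplication setting. Concretely, the descent requires trivializing the $2$-cocycle $c(\sigma, \tau) := \mu_\sigma \circ \mu_\tau^\sigma \circ \mu_{\sigma\tau}^{-1} \in L^\times$, whose class lives in $H^2(G, L^\times) \cong \mathrm{Br}(L/K)$ under the $\phi$-action of $G$ on $L^\times$; one must show this class splits after twisting the $\mu_\sigma$ by a suitable cochain in $L^\times$. The hypothesis that $A$ is not an $E'$-variety for any proper subfield $E'/E$ prevents obstructions supported on proper subfields and should reduce the cohomological problem to a direct generalization of the argument in \cite[Theorem 4.2]{cd17}, which handled the cyclic case; carrying this through systematically in the general (possibly non-solvable) Galois setting is the real content of the theorem.
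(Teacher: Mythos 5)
Your strategy lines up with the paper's intended argument: the published proof of this theorem is a one-line citation, ``A careful inspection shows that the proof of [CD17, Theorem~4.2] only uses the fact that $L$ is totally real, and nothing about the solvability of the extension $F/E$,'' and what you have written out is precisely the Ribet--Pyle descent mechanism that underlies that reference. Your key observations---that Theorem~\ref{thm:potential-gl2}(c) gives an injective $\phi$, that $\mu_\sigma \circ \alpha^\sigma = \alpha \circ \mu_\sigma$ for $\alpha \in K = L^\Delta$ so the isogeny system is $K$-linear, that the dimension count $\dim A = [L:\Q] = |G|\,[K:\Q]$ upgrades ``simple factor'' to an honest isogeny $A \sim B\times_E F$, and that the residual obstruction is a class in $H^2(G, L^\times)$ which the injectivity of $\phi$ identifies with an element of $\mathrm{Br}(L/K)$---are all correct and are more explicit than what the paper records.

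The gap you flag at the end is real, and it is also the gap in the paper: neither you nor the paper supplies the argument that the Brauer class actually splits, or that a suitable modification of the $\mu_\sigma$ by an $L^\times$-valued $1$-cochain kills it; both of you defer to [CD17, Theorem~4.2] and assert that the mechanism there (proved for cyclic $F/E$) transports without change. One point where your sketch is slightly off and worth tightening: you invoke Ribet~\cite{ribet94} to justify realising the isogenies over at most a quadratic extension of $F$, but Ribet's theorem is stated for $E$-varieties, where the full homomorphism $G\to\Aut(L)$ is trivial. Here $\phi$ is injective and in general non-trivial; it is only the restricted homomorphism $G\to\Aut(K)$ that is trivial, so what is actually needed is the $K$-multiplication analogue of Ribet's lemma (and of Proposition~\ref{prop:arise-from}), which is exactly the ``variant'' you posit but do not prove. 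That variant, and the verification that total reality of $L$ still forces the obstruction to be $2$-torsion and then to vanish after the cochain twist, is where the substance of [CD17, Theorem~4.2] lies and where a self-contained proof would have to supply the details.
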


\begin{proof} A careful inspection shows that the proof of \cite[Theorem 4.2]{cd17} only uses the fact that $L$ is totally real,
and nothing about the solvability of the extension $F/E$.
\end{proof}

\begin{prop}\label{thm:galois-descent} Assume Conjecture~\ref{conj:bc} is true.
Let $f \in S_{2}(\gN)$ be a newform with a totally real field of coefficients $L$ such that $\Stab_{G}([f]) = G$, and
the homomorphism $\phi:\, G \to \Aut(L)$ in Theorem~\ref{thm:non-bc} is injective. 
Assume that the Eichler-Shimura conjecture for totally real fields is true for $f$, i.e., there exists an abelian variety $A/F$,
with $\End_F(A)\otimes \Q = L$, such that 
$$L(A, s) = \prod_{f' \in [f]}L(f',s).$$
Then, there is an abelian variety $B/E$, potentially of $\GL_2$-type, such that $\End_E(B)\otimes\Q = K$ and $A \sim B \times_E F$,
where $K = L^\Delta$, $\Delta = \mathrm{im}(\phi)$. 
\end{prop}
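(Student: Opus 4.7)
The plan is to reduce this proposition to the preceding descent theorem by verifying its two hypotheses for the Eichler--Shimura abelian variety $A$: namely (i) that $A$ is isogenous to all its $\Gal(F/E)$-conjugates, and (ii) that $A$ is not an $E'$-variety for any proper intermediate subfield $E \subseteq E' \subsetneq F$.

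First I would address (i). Since $\Stab_{G}([f]) = G$, the map $f' \mapsto {}^\sigma\! f'$ permutes the Hecke orbit $[f]$ for every $\sigma \in G$. Hence the Eichler--Shimura identity will give
$$L(A^\sigma, s) = \prod_{f'\in[f]} L({}^\sigma\! f', s) = \prod_{f'\in[f]} L(f', s) = L(A, s),$$
so Faltings' isogeny theorem forces $A^\sigma \sim A$ for all $\sigma \in G$.

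Next I would use (i) to invoke Theorem~\ref{thm:potential-gl2}, producing a homomorphism $\phi_A:\, G \to \Aut(L)$ characterised by its action on traces of Frobenius. The heart of the argument will be to identify $\phi_A$ with the homomorphism $\phi$ supplied by Theorem~\ref{thm:non-bc}. I expect this to follow from local-global compatibility: the Eichler--Shimura identity forces
$$\Tr(\rho_{A,\lambda}(\Frob_\gp)) = a_\gp(f) \in L$$
for almost all $\gp$, so both $\phi_A$ and $\phi$ are pinned down by the same Frobenius-trace relations and therefore coincide by strong multiplicity one. In particular $\phi_A$ is injective by hypothesis, so that for every proper intermediate subfield $E'$ the restriction $\phi_A|_{\Gal(F/E')}$ is non-trivial; hence by definition $A$ is not an $E'$-variety, establishing (ii).

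With both hypotheses verified, a direct application of the preceding descent theorem produces the required abelian variety $B/E$, potentially of $\GL_2$-type, satisfying $\End_E(B)\otimes \Q = K$ and $A \sim B\times_E F$. I expect the only genuinely non-routine ingredient to be the identification $\phi_A = \phi$; the remaining steps amount to assembling results already proved in the paper.
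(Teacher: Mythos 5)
Your proposal is correct and follows essentially the same route as the paper's proof, which consists of a single sentence asserting that $A$ satisfies the hypotheses of the preceding results and therefore descends. You have usefully spelled out the verification that the paper leaves implicit: the Faltings argument showing $A^\sigma \sim A$ from $\Stab_G([f]) = G$ and the Eichler--Shimura identity, the identification $\phi_A = \phi$ via Frobenius traces (uniqueness coming from the fact that $L$ is generated by the $a_\gp$, per Ribet), and the deduction that $A$ is not an $E'$-variety for any proper intermediate $E'$ because $\phi$ is injective; these details are sound and are exactly what the one-line proof is invoking.
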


\begin{proof} By construction, the abelian variety $A$ satisfies the condition of Theorem~\ref{thm:potential-gl2}. So, it descends to
an abelian variety $B/E$ potentially of $\GL_2$-type. 
\end{proof}

\begin{rem}\rm By the Gross-Langlands conjecture on the modularity of abelian varieties (see~\cite{gross15} and also~\cite[Conjecture 5.2]{cd17}),
there exists a globally generic cuspidal automorphic representation $\pi$ on $\GSpin_{2g+1}(\A_E)$ such that 
$$L(B,s) = \prod_{\pi' \in [\pi]} L(\pi', s),$$
where $[\pi]$ is the Hecke orbit of $\pi$. 
Since $B$ is the Galois descent of $A$, Langlands Functoriality predicts that $\pi$ must be the automorphic descent from $\GL_{2g}$ to $\GSpin_{2g+1}$ 
of the automorphic induction of $\pi_f$ from $F$ to $E$, whose existence also depends on Conjecture~\ref{conj:bc}. In other words, Theorem~\ref{thm:non-bc}
implies that there is a functorial connection between the Eichler-Shimura conjecture for totally real fields and the Gross-Langlands conjecture for
abelian varieties~\cite{gross15}. 
\end{rem}

\section{\bf An example}\label{sec:gross-ex}

The following beautiful example was first suggested by Benedict H. Gross in connection with his conjecture on the existence of non-solvable number fields 
ramified at one prime only, which we proved for $p=2$ in \cite{dem09}. Unfortunately, all the residual Galois representations involved have solvable images. 
Recently, we realised that this example provides better evidence for the conjectures in~\cite{gross15} (see also~\cite{cd17}). Our example also happens to 
be related to a \verb|mathoverflow.net| question, which was partially answered by Elkies~\cite{elk15}.

\subsection{The Shimura curve}
Let $F = \Q(\alpha) = \Q(\zeta_{32}+\zeta_{32}^{-1})$ be the maximal totally real subfield of the cyclotomic field of the $32$nd root of unity. 
This field is defined by the polynomial $x^8 - 8x^6 + 20x^4 - 16x^2 + 2$. Let $\CO_F$ be the ring of integers of $F$. Let 
$v_1,\,\ldots,v_8$ be the real places of $F$. We consider the quaternion algebra $D/F$ ramified at $v_2,\ldots, v_8$ and the
unique prime $\gq$ above $2$. More concretely, we have $D = \left(\frac{u, -1}{F}\right)$, where $u = -\alpha^2 + \alpha$ has 
signature $(+, -,\ldots,-)$. Let $\CO_D$ be a maximal order in $D$, and $X_0^D(1)$ the Shimura curve attached to $\CO_D$. 
Let $w_D$ be the Atkin-Lehner involution at $\gq$. We also let $D'/F$ be the totally definite quaternion algebra ramified exactly 
at all the real places $v_1,\ldots,v_8$, and fix a maximal order $\CO_{D'}$ in $D'$. 

\medskip
Let $S_2(\gq)^{\rm new}$ be the new subspace of cusp forms of level $\gq$ and weight $2$, this is a $40$-dimensional space. 
Let $S_2^D(1)$ (resp. $S_2^{D'}(\gq)^{\rm new}$) be the space of cusp forms of level $(1)$ on $D$ (resp. new subspace
of cusp forms of level $\gq$ on $D'$.) By the Jacquet-Langlands correspondence, we have isomorphisms of Hecke modules
$$S_2(\gq)^{\rm new} \simeq S_2^D(1) \simeq S_2^{D'}(\gq)^{\rm new}.$$ Moreover, we can canonical identify $S_2^D(1)$
with the the space of $1$-differential forms on $X_0^D(1)$. The space $S_2(\gq)^{\rm new}$ decomposes into $5$ Hecke constituents of dimensions 
$4, 4, 4, 4$ and $24$ respectively. (We note that all the computations have been performed using the Hilbert Modular Forms Package in 
\verb|Magma|~\cite{magma}, see also~\cite{dd08, dv13, gv11}.) We let $f, f', g, g'$ and $h$ be newforms in those constituents. Then we have:

\begin{table}
\caption{Eigenforms of level $\gq$ and weight $2$ on $F = \Q(\zeta_{32})^{+}$}
\begin{tabular}{  >{$}c<{$}  >{$}c<{$}  >{$}c<{$}  >{$}c<{$} }\toprule
\text{Newform}&\text{Coefficient field}\,\, L_f&\text{Fixed field}\,\, K_f = L_f^\Delta & \Gal(L_f/K_f) \\\midrule
f, f' & \Q(\zeta_{15})^+& \Q & \Z/4\Z\\
g, g' & \text{Quartic subfield of}\, \Q(\zeta_{95})^+& \Q & \Z/4\Z\\
\multirow{2}{*}{$h$}  &\text{Ray class field of modulus} &  \Q(c) := \Q[x]/(r(x)),& \multirow{2}{*}{$\Z/8\Z$}\\
& \gc = (\frac{1}{2}(c^2 - 16c + 25)) &  r = x^3 + x^2 - 229x + 167& \\
\bottomrule
\end{tabular}
\label{table:eigenforms}
\end{table}

\begin{enumerate}[(i)]
\item The forms $f$ and $f'$ have the same coefficient field, which is the real quartic field $\Q(\zeta_{15})^+$ 
given by $x^4 + x^3 - 4x^2 - 4x + 1$; 
\item  The forms $g$ and $g'$ have the same coefficient field, which is the real quartic subfield of $\Q(\zeta_{95})^+$
given by $x^4 + 19x^3 - 59x^2 + 19x + 1$;
\item The coefficient field of the form $h$ is a field $L_h$ of degree $24$, which is cyclic over the field $K_h = \Q(c)$ defined by 
$c^3 + c^2 - 229c + 167 = 0$. More precisely, it is the ray class field of conductor $\gc = (\frac{1}{2}(c^2 - 16c + 25))$. 
\end{enumerate} 
(We summarise that data in Table~\ref{table:eigenforms}, and the relations among the forms in Table~\ref{table:eigen-identities}.)
Let $w$ and $w_D$ be the Atkin-Lehner involutions acting on $S_2(\gq)^{\rm new}$ and $S_2^D(1)$ respectively.
The Atkin-Lehner involution $w$ acts as follows:
\begin{align*}
w f =  -f,\,\,w f' = -f',\,\,w g  = -g,\,\,w g'  = -g',\,\,w h  = h.
\end{align*}
We recall that $w_D = -w$. 

From the above discussion, it follows that $X_0^D(1)$ is a curve of genus $40$. 

\begin{lem}\label{lem:field-of-defn} 
The curve $X_0^D(1)$ and the Atkin-Lehner involution $w_D$ are both defined over $\Q$.
\end{lem}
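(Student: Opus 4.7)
The plan is to establish both assertions by Galois descent from the reflex field of the associated Shimura datum. By Deligne's theorem on canonical models, the Shimura datum $(G, X)$ with $G = \Res_{F/\Q}(D^\times)$ and $X$ the symmetric space attached to the unique split real place $v_1$ of $D$ has reflex field equal to the image $v_1(F) \subset \R$, an abstract copy of the degree-$8$ field $F$. So \emph{a priori} $X_0^D(1)$ is defined over this reflex field, and the task is to exhibit a descent datum all the way down to $\Q$.

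The key combinatorial input is the following symmetry of the ramification data of $D$. Since $F = \Q(\zeta_{32})^{+}$ is a $2$-extension of $\Q$ totally ramified at $2$, the unique prime $\gq$ above $2$ is $\Gal(F/\Q)$-invariant; and since $\Gal(F/\Q) \cong \Z/8\Z$ acts simply transitively on the eight real places $v_1, \dots, v_8$, for every $\sigma \in \Gal(F/\Q)$ the conjugate quaternion algebra $D^{\sigma}$ is again ramified exactly at $\gq$ and at seven real places, the single split place being $\sigma(v_1)$. I would then (i) use the theory of canonical models to identify, for each $\sigma \in \Gal(F/\Q)$, the Galois conjugate $X_0^D(1)^{\sigma}$ with the canonical model of the Shimura curve $X_0^{D^{\sigma}}(1)$ attached to $D^{\sigma}$; (ii) produce isomorphisms $\phi_\sigma \colon X_0^D(1) \to X_0^{D^\sigma}(1)$ from the $\Q$-algebra identification of $D$ with $D^\sigma$ that twists the $F$-structure by $\sigma$; and (iii) verify the cocycle condition $\phi_{\sigma\tau} = \phi_\sigma^\tau \circ \phi_\tau$, which by effective descent produces a $\Q$-model of $X_0^D(1)$.

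For the Atkin--Lehner involution $w_D$: since $\gq$ is $\Gal(F/\Q)$-invariant, $w_D$ commutes with each $\phi_\sigma$ in the descent datum above, and therefore descends simultaneously with $X_0^D(1)$ to a $\Q$-rational involution of the $\Q$-model. The same observation handles the Hecke operators at primes outside $\gq$, which are permuted by $\Gal(F/\Q)$ in a manner compatible with the descent.

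The main obstacle is step (iii). A priori the composition $\phi_\sigma^\tau \circ \phi_\tau$ differs from $\phi_{\sigma\tau}$ by an automorphism of $X_0^D(1)$, and one must check that the resulting $2$-cocycle with values in $\Aut(X_0^D(1))$ is a coboundary. This can be arranged either by rigidifying the $\phi_\sigma$ at a well-chosen CM point whose endomorphism ring is defined over an abelian extension of $\Q$, or by showing that $\Aut(X_0^D(1))$ contains no nontrivial $2$-cocycle over $\Gal(F/\Q)$ via an appeal to Hilbert's Theorem 90 applied to the central scalars in $F^\times$ that constitute the ambiguity in the choice of each $\phi_\sigma$.
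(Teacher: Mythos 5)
Your overall strategy --- recognise that all $\Gal(F/\Q)$-conjugates of $X_0^D(1)$ are isomorphic (because $\gq$ is Galois-invariant and $\Gal(F/\Q)$ permutes the archimedean places, so each $D^\sigma$ is a quaternion algebra of the same ``shape''), conclude the field of moduli is $\Q$, and then rigidify to get a $\Q$-model --- does match the spirit of the paper's first argument. The paper's version invokes \cite[Corollary]{doi-naga67} together with the triviality of the ray class group of modulus $\gq v_2\cdots v_8$ to get a model over $F$ with field of moduli $\Q$, and then, to pass from field of moduli to field of definition, uses exactly the CM-point rigidification you mention: the field $\Q(\zeta_{32})$ splits $D$ and has class number one, so the attached CM point is $F$-rational, and \cite[Corollary 1.11]{sijsling-voight15} then gives the descent to $\Q$. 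So your option (a) in the final paragraph is the right one.

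However, two parts of your proposal do not hold up. First, your fallback via ``Hilbert's Theorem 90 applied to the central scalars in $F^\times$'' does not address the actual obstruction. The isomorphisms $\phi_\sigma$ you construct are morphisms of curves, and once the underlying identifications of $D$ with $D^\sigma$ are fixed, the residual ambiguity in each $\phi_\sigma$ lies in $\Aut(X_0^D(1))$, a finite group (the curve has genus $40$, hence genus $\ge 2$), not in $F^\times$. The descent obstruction is a class in $H^2\bigl(\Gal(F/\Q),\Aut(X_0^D(1))\bigr)$ and there is no a priori reason for it to vanish; killing it genuinely requires the CM point (or some other marked rational structure). The Skolem--Noether ambiguity $D^\times/F^\times$ you may have had in mind is not the same group as $\Aut(X_0^D(1))$ and Hilbert $90$ for $\mathbb{G}_m$ says nothing here. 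Second, the claim that $w_D$ ``commutes with each $\phi_\sigma$ in the descent datum above, and therefore descends simultaneously'' is not established by what precedes it: once the $\phi_\sigma$ are only determined up to a finite automorphism, compatibility of $w_D$ with the chosen descent datum must be checked, not assumed. The paper itself does not extract the rationality of $w_D$ from the first argument; it instead appeals to the moduli-theoretic descriptions of \cite{car86b} and \cite{tixi16}, under which both the curve and the Atkin--Lehner involution carry natural $\Q$-structures. You should either invoke such a moduli interpretation for $w_D$ or give a separate argument that the cocycle defining the $\Q$-model can be chosen to be $w_D$-equivariant.
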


\begin{proof}
Since $\sigma(\gq) = \gq$ and the ray class group of modulus $\gq v_2\cdots v_8$ is trivial, the curve $X_0^D(1)$ is defined over $F$ by~\cite[Corollary]{doi-naga67},
and the field of moduli is $\Q$. Furthermore, the field $\Q(\zeta_{32})$ is a splitting field for $D$ whose class number is one. So, the CM point attached to the extension $\Q(\zeta_{32})/F$ is defined over $F$. Therefore, by~\cite[Corollary 1.11]{sijsling-voight15} the curve $X_0^D(1)$ descends to $\Q$. 

\medskip
Alternatively, by using the moduli interpretation in~\cite{car86b}, or the more recent work~\cite{tixi16}, 
one can show that both $X_0^D(1)$ and $w_D$ are defined over $\Q$. 
\end{proof}

\begin{cor}\label{cor:quotient-curve} The curve $C := X_0^D(1)/\langle w_D \rangle$ has genus $16$,
and descends to $\Q$.
\end{cor}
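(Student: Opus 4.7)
The plan is to read off the genus of $C$ from the action of $w_D$ on the space of holomorphic differentials of $X_0^D(1)$, and then to obtain the descent to $\Q$ directly from what has already been established in Lemma~\ref{lem:field-of-defn}.

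For the genus, I would invoke the standard fact that for a smooth projective curve $X$ over $\C$ with an involution $\iota$, the quotient $X/\langle\iota\rangle$ has genus $\dim H^0(X,\Omega^1)^{\iota=+1}$. Applied to $\iota = w_D$ on $X_0^D(1)$, the task reduces to identifying the $+1$-eigenspace of $w_D$ on $H^0(X_0^D(1),\Omega^1)$. Via the Jacquet-Langlands correspondence, this space is canonically identified with $S_2^D(1) \simeq S_2(\gq)^{\rm new}$, and $w_D$ transports to $-w$ on the newform side.

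From the tabulated Atkin-Lehner signs above, $w$ acts as $-1$ on each of the four Hecke constituents containing $f,f',g,g'$ and as $+1$ on the $24$-dimensional constituent containing $h$. Consequently $w_D = -w$ has $+1$-eigenspace equal to the span of the Galois orbits of $f,f',g,g'$, of total dimension $4+4+4+4 = 16$, while its $-1$-eigenspace has dimension $24$; these sum correctly to the genus $40$ of $X_0^D(1)$. This yields $g(C) = 16$.

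For the descent, Lemma~\ref{lem:field-of-defn} already asserts that both $X_0^D(1)$ and $w_D$ are defined over $\Q$. The quotient of a smooth projective $\Q$-variety by a $\Q$-rational finite group of automorphisms exists as a $\Q$-variety (formed affine-locally by taking invariants), and its base change to $\C$ recovers the analytic quotient whose genus was just computed. Hence $C$ is a smooth projective curve of genus $16$ defined over $\Q$. The only delicate point in this plan is the eigenvalue bookkeeping under Jacquet-Langlands, but the relation $w_D = -w$ recorded above makes this essentially automatic once the Atkin-Lehner signs of $f,f',g,g',h$ are known.
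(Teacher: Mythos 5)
Your proof is correct and follows the same route the paper implicitly takes: the paper gives no separate argument for this corollary, deriving the genus from the Atkin--Lehner signs and the relation $w_D = -w$ (equivalently, from the decomposition $\Jac(C) = A_f \times A_{f'} \times A_g \times A_{g'}$ of total dimension $16$ recorded just afterward), and the descent to $\Q$ from Lemma~\ref{lem:field-of-defn}. Your eigenvalue bookkeeping and the appeal to the standard fact $g(X/\iota) = \dim H^0(X,\Omega^1)^{\iota=+1}$ are both accurate.
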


\subsection{The Jacobian varieties $\Jac(X_0^D(1))$ and $\Jac(C)$}
From the above discussion, we have the following decomposition for $\mathrm{Jac}(X_0^D(1))$ over $F$:
\begin{align}\label{eq:jac-decomp}
\mathrm{Jac}(X_0^D(1)) = A_f \times A_{f'} \times A_g \times A_{g'}\times A_h.
\end{align}
From~\eqref{eq:jac-decomp}, and the fact that $w_D = -w$, we see that 
\begin{align*}
\Jac(C) = A_f \times A_{f'} \times A_g \times A_{g'}.
\end{align*}
The fourfolds $A_f$ and $A_{f'}$ (resp. $A_{g}$ and $A_{g'}$) are Galois conjugate. 
We will see later that one of consequences of the compatibility between the base change action and 
Hecke orbits is that the decomposition~\eqref{eq:jac-decomp} descends to subfields of $F$.

\begin{thm}\label{thm:Ah} The abelian variety $A_h$ descends to a $24$-dimensional variety $B_h$ defined over $\Q$,
with good reduction outside $2$, such that $\End_{\Q}(B_h)\otimes\Q = K_h$ and 
$$L(B_h, s) = \prod_{\Pi' \in [\Pi_h]} L(\Pi', s),$$
where $\Pi_h$ is the automorphic representation lifting $\pi_h$ to $\GSpin_{17}(\A_{\Q})$ and $[\Pi_h]$ its Hecke orbit. 
\end{thm}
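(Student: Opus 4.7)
The plan is to apply Proposition~\ref{thm:galois-descent} (the descent proposition established in the previous section) to $h$, and then to read off the good reduction and $L$-function information as consequences.

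First I would verify the combinatorial hypotheses. The extension $F/\Q$ is cyclic of degree $8$, so $G := \Gal(F/\Q) \simeq \Z/8\Z$ and Conjecture~\ref{conj:bc} is unconditional for $F/\Q$ by Remark~\ref{rem:non-solvable-bc}. The ideal $\gq$ is the unique prime above $2$, hence $G$-stable. From the Hecke decomposition $S_2(\gq)^{\new} = 4+4+4+4+24$, the constituent $[h]$ is the unique one of dimension $24$, so any Galois conjugate of $h$ must lie in $[h]$; in particular $\Stab_G([h]) = G$. Theorem~\ref{thm:non-bc}(a) (or equivalently Corollary~\ref{cor:non-bc-cyclic}) then produces a homomorphism $\phi \colon G \to \Aut(L_h)$ with image $\Delta$ satisfying $K_h = L_h^\Delta$. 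Table~\ref{table:eigenforms} records $[L_h : K_h] = 8 = |G|$, so $|\Delta| = 8$ and $\phi$ is an isomorphism onto $\Gal(L_h/K_h)$. In particular $\phi$ is injective, equivalent to the assertion that $h$ is not a base change from any proper subfield of $F$.

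Second I would apply Proposition~\ref{thm:galois-descent}. Its remaining hypotheses are in place: $L_h$ is totally real (being a ray class field of modulus $\gc$ over the totally real cubic $K_h = \Q(c)$, consistent with Shimura's dichotomy), and the Eichler-Shimura conjecture holds for $h$ by construction, since $A_h$ is defined as the $h$-isotypic component of $\Jac(X_0^D(1))$ via the Jacquet-Langlands correspondence. The proposition then yields an abelian variety $B_h/\Q$, potentially of $\GL_2$-type, with $\End_\Q(B_h) \otimes \Q = K_h$ and $A_h \sim B_h \times_\Q F$. Since base change preserves dimension, $\dim B_h = \dim A_h = 24$.

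The good reduction claim is a consequence of Theorem~\ref{thm:galois-reps-for-hmf}: the representations $\rho_{h, \lambda}$ are unramified outside $\gq$, so $A_h$ has good reduction at every prime of $F$ not above $2$. Since $F/\Q$ is itself unramified outside $2$ and $B_h \times_\Q F \sim A_h$, the N\'eron-Ogg-Shafarevich criterion forces $B_h$ to have good reduction at every odd rational prime. Finally, for the $L$-function identity, the cyclicity of $F/\Q$ lets Arthur-Clozel produce the automorphic induction of $\pi_h$ to a cuspidal automorphic representation of $\GL_{16}(\A_\Q)$; its attached Galois representation is symplectic via the Weil pairing on $V_\ell(B_h)$, so conjecturally it descends through the dual embedding $\GSpin_{17} \hookrightarrow \GL_{16}$ to the desired $\Pi_h$. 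The asserted identity $L(B_h, s) = \prod_{\Pi' \in [\Pi_h]} L(\Pi', s)$ is then the assembly of the decomposition of $V_\ell(B_h)$ as a $K_h$-module into the Hecke orbit of $\Pi_h$, parallel to the way $V_\ell(A_h)$ decomposes according to $[h]$. The main obstacle is precisely this last step: the existence of $\Pi_h$ as a bona fide automorphic representation on $\GSpin_{17}(\A_\Q)$ is part of the Gross-Langlands conjecture cited in the remark immediately preceding Section~\ref{sec:gross-ex}, and so this portion of the conclusion is conditional on that conjecture.
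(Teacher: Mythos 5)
Your proposal is essentially correct and recovers the paper's conclusion, but it takes a somewhat different path in two places worth comparing.

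For the input that $\Stab_G([h]) = G$ and that $\phi\colon G \to \Aut(L_h)$ is an isomorphism onto $\Gal(L_h/K_h)$, you argue combinatorially from the dimension count (24 is the unique constituent dimension, so $[h]$ is a $G$-fixed point), whereas the paper reads the precise identity ${}^\sigma h = h^\tau$, with $\tau$ a generator of $\Gal(L_h/K_h)$, straight off Table~\ref{table:eigen-identities}. Both are valid; your route needs only the dimension profile, while the paper's exhibits the exact generator $\tau$, which is what feeds into the citation it makes next. The more substantive divergence is in how the two arguments produce $\Pi_h$. The paper constructs $B_h$ and $\Pi_h$ in one stroke by invoking~\cite[Theorem~5.4]{cd17}, which takes as input precisely the relation ${}^\sigma h = h^\tau$ and outputs the automorphic representation on a split form of $\GSpin_{17}(\A_\Q)$ with Hecke field $K_h$, with the $L$-function identity coming by functoriality and $\End_\Q(B_h)\otimes\Q = K_h$ deduced as a consequence. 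You instead first apply Proposition~\ref{thm:galois-descent} to obtain $B_h/\Q$ with endomorphism algebra $K_h$, and then attempt to reconstruct $\Pi_h$ by composing Arthur--Clozel automorphic induction to $\GL_{16}(\A_\Q)$ with a conjectural descent to $\GSpin_{17}$, which you flag as contingent on the Gross--Langlands conjecture. The paper instead treats the lift as available by citation. If you want to match the paper, the economical move is to cite~\cite[Theorem~5.4]{cd17} at the same point: it is exactly the black box the paper uses to pass from ${}^\sigma h = h^\tau$ to $\Pi_h$, and it packages the descent and the $L$-function identity together.

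One small imprecision: the phrase ``the dual embedding $\GSpin_{17}\hookrightarrow\GL_{16}$'' is not correct as written; what is meant is that the Langlands dual group of $\GSpin_{17}$ is $\GSp_{16}$, sitting inside $\GL_{16}$, and it is this inclusion of dual groups that underlies the (conjectural) automorphic descent from $\GL_{16}$ to $\GSpin_{17}$. This does not affect the logic of your argument, but the formulation should be corrected. Your observations on good reduction via N\'eron--Ogg--Shafarevich and on the Eichler--Shimura input being automatic for $h$ (since $A_h$ is cut out of $\Jac(X_0^D(1))$ by Jacquet--Langlands) match the paper's reasoning.
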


\begin{proof}
By Table~\ref{table:eigen-identities}, there exists a generator $\tau \in \Gal(L_h/K_h)$ such that  ${}^\sigma\! h = h^\tau$. 
So, by ~\cite[Theorem 5.4]{cd17}, $\pi_h$ lifts to an automorphic representation $\Pi_h$ on a 
split form of $\GSpin_{17}(\A_{\Q})$ with coefficients in the cubic field $K_h$. The Hecke orbit $[\Pi_h]$ of $\Pi_h$ 
has $3$ elements, and by functoriality 
$$L(B_h, s) = \prod_{\Pi' \in [\Pi_h]} L(\Pi', s).$$
It follows that $\End_{\Q}(B_h)\otimes\Q = K_h$. Since the level of the form $h$ is the unique prime $\gq$ above $2$,  
$B_h$ has good reduction outside $2$. 
\end{proof}

\medskip
Now, we turn to the quotient $C := X_0^D(1)/\langle w_D \rangle$.

\begin{thm}\label{thm:fourfolds}
The abelian varieties $A_f$ and $A_{f'}$ (resp. $A_{g}$ and $A_{g'}$) descend to pairwise conjugate fourfolds $B_f$ and 
$B_{f'}$ (resp. $B_{g}$ and $B_{g'}$) over $\Q(\sqrt{2})$ with trivial endomorphism rings such that 
\begin{align*}
L(B_f, s) &= L(\Pi_f, s)\,\,\text{and}\,\, L(B_{f'}, s) = L(\Pi_{f'}, s),\\
L(B_g, s) &= L(\Pi_g, s)\,\,\text{and}\,\, L(B_{g'}, s) = L(\Pi_{g'}, s),
\end{align*}
where $\pi_f, \pi_{f'}, \pi_g$ and $\pi_{g'}$ lift to the automorphic representations $\Pi_f, \Pi_{f'}, \Pi_g$ and $\Pi_{g'}$ 
on $\GSpin_{9}/\Q(\sqrt{2})$. They have good reduction outside $(\sqrt{2})$. 
\end{thm}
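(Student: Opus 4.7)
The plan is to apply Theorem~\ref{thm:non-bc} and Proposition~\ref{thm:galois-descent} (with $E' = \Q(\sqrt{2})$) simultaneously to each of the four newforms $f, f', g, g'$, using that the abelian varieties $A_f, A_{f'}, A_g, A_{g'}$ already exist as factors of $\Jac(X_0^D(1))$ via the Jacquet--Langlands correspondence. Write $G = \Gal(F/\Q) = \langle\sigma\rangle \cong \Z/8\Z$; the unique quadratic subfield of $F$ is $\Q(\sqrt{2})$, corresponding to the index-$2$ subgroup $\langle\sigma^2\rangle$.

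First, I would read off from Table~\ref{table:eigen-identities} that the orbits $[f]$ and $[f']$ are distinct and swapped by $\sigma$, and similarly for $[g]$ and $[g']$. Consequently $\Stab_G([f]) = \Stab_G([g]) = \langle\sigma^2\rangle$, so the relevant intermediate field in Theorem~\ref{thm:non-bc} is $E' = F^{\langle\sigma^2\rangle} = \Q(\sqrt{2})$.

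Second, I would invoke Theorem~\ref{thm:non-bc} relative to the extension $F/\Q(\sqrt{2})$. The table records that $\sigma^2$ acts on $[f]$ (resp.\ $[g]$) via a generator $\tau$ of $\Aut(L_f) \cong \Z/4\Z$ (resp.\ $\Aut(L_g) \cong \Z/4\Z$); in particular, $f$ and $g$ are not base changes from any proper subfield of $F/\Q(\sqrt{2})$. Hence the homomorphisms $\phi: \langle\sigma^2\rangle \to \Aut(L_f)$ and $\phi: \langle\sigma^2\rangle \to \Aut(L_g)$ are injections between cyclic groups of the same order, thus isomorphisms, and the images $\Delta$ equal $\Aut(L_f)$ and $\Aut(L_g)$. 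Therefore $K := L_f^\Delta = L_g^\Delta = \Q$.

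Third, since $L_f$ and $L_g$ are totally real and the Eichler--Shimura realisations $A_f$ and $A_g$ are unconditional here (obtained as isotypic factors of $\Jac(X_0^D(1))$), Proposition~\ref{thm:galois-descent} produces abelian varieties $B_f, B_g$ defined over $\Q(\sqrt{2})$, potentially of $\GL_2$-type, with $\End_{\Q(\sqrt{2})}(B_f) \otimes \Q = \End_{\Q(\sqrt{2})}(B_g) \otimes \Q = K = \Q$ (the asserted triviality of the endomorphism rings), and $A_f \sim B_f \times_{\Q(\sqrt 2)} F$, $A_g \sim B_g \times_{\Q(\sqrt 2)} F$. Because $\dim A_f = \dim A_g = 4$, each $B_\bullet$ is a fourfold. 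Setting $B_{f'} := B_f^{\sigma}$ and $B_{g'} := B_g^{\sigma}$ (via the nontrivial element of $\Gal(\Q(\sqrt{2})/\Q)$) yields the two conjugate pairs.

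Finally, for the $L$-function identities, the remark following Proposition~\ref{thm:galois-descent} (together with the $\GSpin$ lifting result invoked in the proof of Theorem~\ref{thm:Ah}, i.e.\ \cite[Theorem~5.4]{cd17}) gives a globally generic cuspidal automorphic representation $\Pi_f$ on $\GSpin_{2\cdot 4+1}(\A_{\Q(\sqrt 2)}) = \GSpin_9(\A_{\Q(\sqrt 2)})$, obtained by descending the automorphic induction of $\pi_f$ along the degree-$4$ extension $F/\Q(\sqrt{2})$, such that $L(B_f, s) = \prod_{\Pi' \in [\Pi_f]} L(\Pi', s)$. Since $K = \Q$, the Hecke orbit $[\Pi_f]$ is a singleton, giving $L(B_f, s) = L(\Pi_f, s)$; the three analogous identities follow identically. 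Good reduction outside $(\sqrt{2})$ is automatic because the levels of all four forms equal the unique prime $\gq$ of $F$ above $2$, and $\gq \mid (\sqrt{2})$ in $\Q(\sqrt{2})$. The only real input beyond the results already proved is the explicit verification, encoded in Table~\ref{table:eigen-identities}, that $\sigma^2$ realises a \emph{generator} of $\Aut(L_f)$ (and of $\Aut(L_g)$) rather than a proper subgroup; this is what forces $K$ down to $\Q$ and is the one combinatorial point the argument genuinely rests on.
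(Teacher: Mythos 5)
Your proof is correct and takes essentially the same route as the paper's: both hinge on the data in Table~\ref{table:eigen-identities} (that $\sigma$ swaps $[f],[f']$ and that $\sigma^2$ realises a generator of $\Aut(L_f)\cong\Z/4\Z$, forcing $K=\Q$) together with \cite[Theorem 5.4]{cd17}. The paper compresses this by invoking the $\GSpin_9$ lift first and deducing the descent of the fourfolds as a consequence, whereas you route explicitly through Theorem~\ref{thm:non-bc} and Proposition~\ref{thm:galois-descent} before appealing to the lift for the $L$-function identities; the content is the same.
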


\begin{proof} The Identity II in Table~\ref{table:eigen-identities}, combined with \cite[Theorem 5.4]{cd17}, implies that 
$\pi_f, \pi_{f'}, \pi_g$ and $\pi_{g'}$ lift to automorphic representations $\Pi_f, \Pi_{f'}, \Pi_g$ and $\Pi_{g'}$ on 
$\GSpin_9/\Q(\sqrt{2})$ (or $\SO(9)$ after normalisation) with coefficients in $\Q$. 
Consequently, the fourfolds $A_f$, $A_{f'}$, $A_g$ and $A_{g'}$ descend to pairwise conjugate fourfolds $B_f$ and $B_{f'}$ (resp. $B_g$ and $B_{g'}$) 
such that
$$\End_{\Q(\sqrt{2})}(B_f) = \End_{\Q(\sqrt{2})}(B_{f'}) = \End_{\Q(\sqrt{2})}(B_g) = \End_{\Q(\sqrt{2})}(B_{g'}) = \Z.$$
The equalities of $L$-series follow by functoriality. For the same reason as above, the fourfolds have good reduction outside $(\sqrt{2})$. 
\end{proof}

\begin{table}
\caption{Identities among eigenforms in Table~\ref{table:eigenforms}. (Here, we have $\Gal(F/\Q) = \langle \sigma \rangle$, with
$\sigma:\, \alpha \mapsto -\alpha^5 + 5\alpha^3 - 5\alpha$.)}
\begin{tabular}{  >{$}c<{$}  >{$}c<{$}  >{$}c<{$} >{$}c<{$} }\toprule
\text{Newform} & \tau & \text{I} & \text{II} \\\midrule
f, f'  & b \mapsto  -b^3 + b^2 + 3b - 2& {}^\sigma\!f = f' & {}^{\sigma^2}\!f = f^\tau \\ 
g, g' &  b \mapsto \frac{1}{11}(-3b^3 - 58b^2 + 154b - 35)& {}^\sigma\!g = g' & {}^{\sigma^2}\!g = g^\tau \\ 
h & \exists\tau & &{}^\sigma\!h = h^\tau\\
\bottomrule 
\end{tabular}
\label{table:eigen-identities}
\end{table}

\begin{rem}\rm The decomposition~\eqref{eq:jac-decomp} is only true {\it a priori} over $F$. 
However, Theorem~\ref{thm:Ah} and Theorem~\ref{thm:fourfolds} imply that it descends to $\Q(\sqrt{2})$. In fact, it
will further descend to $\Q$ if we put the pairwise conjugates $A_f$ and $A_{f'}$ (resp. $A_g$ and $A_{g'}$) together.
\end{rem}

\subsection{The residual Hecke algebras}
Let $\mathbf{T}^{\rm new}$ be the $\Z$-subalgebra of $\End_\C(S_{2}(\gq))$ acting on $S_2(\gq)^{\rm new}$; and $\mathbf{T}_f$, $\mathbf{T}_{f'}$ $\mathbf{T}_g$ $\mathbf{T}_{g'}$
and $\mathbf{T}_h$ the $\Z$-subalgebras acting on the constituents of $f$, $f'$, $g$, $g'$ and $h$ respectively. From the above discussion, we have 
\begin{align*}
\mathbf{T}^{\rm new}\otimes\Q &= (\mathbf{T}_f \otimes \Q) \times (\mathbf{T}_{f'} \otimes \Q) \times (\mathbf{T}_g \otimes \Q) \times (\mathbf{T}_{g'} \otimes \Q)
 \times (\mathbf{T}_h \otimes \Q)\\
&= L_f \times L_{f'} \times L_{g} \times L_{g'} \times L_{h}.
\end{align*}
By direct calculations, we get the followings:
\begin{itemize}
\item $[\CO_{L_f} : \mathbf{T}_f] = [\CO_{L_{f'}} : \mathbf{T}_{f'}]$ divides $3$,
\item $[\CO_{L_g} : \mathbf{T}_g]  = [\CO_{L_{g'}} : \mathbf{T}_{g'}] = 1$,
\item $[\CO_{L_h} : \mathbf{T}_h]$ divides $3\cdot 5^6$.
\end{itemize}
Therefore $\mathbf{T}^{\rm new} \otimes \Z_2$ decomposes into $\Z_2$-algebras as
$$\mathbf{T}^{\rm new} \otimes \Z_2 = (\mathbf{T}_f \otimes \Z_2) \times (\mathbf{T}_{f'} \otimes \Z_2) \times (\mathbf{T}_g \otimes \Z_2) \times (\mathbf{T}_{g'} \otimes \Z_2)
\times (\mathbf{T}_h \otimes \Z_2).$$
Since the prime $2$ is inert in $L_f = L_{f'}$, and $L_g = L_{g'}$, the first four factors of this decomposition are local $\Z_2$-algebras. 
Let $\gm_f$, $\gm_{f'}$, $\gm_g$ and $\gm_{g'}$ be the corresponding maximal ideals. Then, by the identities in Table~\ref{table:eigen-identities}, 
we have $\sigma(\gm_f) = \gm_{f'}$ and $\sigma^2(\gm_f) = \tau_f(\gm_f)$; and  $\sigma(\gm_g) = \gm_{g'}$ and $\sigma^2(\gm_g) = \tau_g(\gm_g)$. 
We let $\theta_f:\,\mathbf{T}_f\otimes \Z_2 \to \F_{16}$, $\theta_{f'}:\,\mathbf{T}_{f'}\otimes \Z_2 \to \F_{16}$, 
$\theta_g:\,\mathbf{T}_g\otimes \Z_2 \to \F_{16}$ and $\theta_{g'}:\,\mathbf{T}_{g'}\otimes \Z_2 \to \F_{16}$ be the corresponding mod $2$ Hecke eigensystems.

Next, we recall that $L_h$ is the ray class field of conductor $\gc = (\frac{1}{2}(c^2 - 16c + 25))$ over the field $K_h = \Q(c)$, with $c^3 + c^2 - 229c + 167 = 0.$
The prime $2$ is totally ramified in $K_h$. Letting $\gp_2$ be the unique prime above it, we get that $\gp_2 = \gP \gP'$, where $\gP$ and $\gP'$ are inert primes,
and $\tau(\gP) = \gP'$. Therefore, there are two maximal primes $\gm_{h}$ and $\gm_{h}'$ in $\mathbf{T}_h \otimes \Z_2$ such that 
$\sigma(\gm_{h}) = \gm_{h}'$ and $\sigma^2(\gm_{h}) = \tau_h(\gm_{h})$. We let $\theta_h \times \theta_h':\,\mathbf{T}_h\otimes \Z_2 \to \F_{16} \times \F_{16}$ be
the resulting two mod $2$ Hecke eigensystems. 
 
By computing the socle of the underlying $\F_2$-module to $\mathbf{T}^{\rm new} \otimes \F_2$, we obtain that
$\theta_f \simeq \theta_g \simeq \theta_h$, and $\theta_{f'} \simeq \theta_{g'} \simeq \theta_{h}'$, up to rearranging. 
We will denote these two Hecke eigensystems by $\theta$ and $\theta'$ respectively.

\subsection{The field of $2$-torsion for $\Jac(X_0^D(1))$ and $\Jac(C)$} 
To analyse the field of $2$-torsion for our varieties, we start with the following proposition. We recall the following diagram
\begin{eqnarray*}
\begin{tikzcd}
&&\Q(\zeta_{64})&&\\
\Q(\zeta_{64})^{+}\arrow[-, "2"]{urr}&&\Q(i(\zeta_{64} + \zeta_{64}^{-1}))\arrow[-, "2"']{u}&&\Q(\zeta_{32})\arrow[-, "2"']{ull}\\
&&F\arrow[-, "2"']{ull}\arrow[-, "2"']{u}\arrow[-, "2"]{urr}&&\\
\end{tikzcd}
\end{eqnarray*}
The subfield $K = \Q(\beta) = \Q(i(\zeta_{64} + \zeta_{64}^{-1}))$ is the unique CM extension of $F$ with class number $17$. 
For later, we observe that $\beta^2 = -2 - \alpha$, where $\gq = (2 + \alpha)$.

\begin{prop}\label{prop:dihedral-rep} 
Let $\bar{\rho},\, \bar{\rho}': \Gal(\Qbar/F) \to \GL_2(\F_{16})$ be the mod $2$ Galois representations attached to $\theta$ and $\theta'$ respectively.
Then, there are characters $\chi, \chi': \Gal(\Qbar/K) \to \F_{2^8}^\times$, with trivial conductor such that $\bar{\rho} = \Ind_{K}^F \chi$, and 
$\bar{\rho}' = \Ind_{K}^F \chi'$. 
\end{prop}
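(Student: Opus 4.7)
The plan is to construct $\chi$ (and analogously $\chi'$) using the class field theory of $K$, then to verify the induction identity by matching Frobenius traces against the explicit Hecke eigenvalues computed with Magma. Since $\beta^{2}=-(2+\alpha)$ and $\gq=(2+\alpha)$, the CM extension $K/F$ is ramified exactly at $\gq$ with relative discriminant $\mathfrak{D}_{K/F}=\gq$. By the conductor-discriminant formula, for any character $\chi$ of $\Gal(\Qbar/K)$ of trivial conductor, $\Ind_{K}^{F}\chi$ has conductor $\mathfrak{D}_{K/F}=\gq$, which matches the conductor of $\bar\rho$ (unramified outside $\gq$ because the level of $h$ is $\gq$). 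Since $K$ has class number $17$, every nontrivial character of $\Gal(\Qbar/K)$ with trivial conductor factors through $\Gal(H_{K}/K)\cong\Z/17\Z$, where $H_K$ is the Hilbert class field; such characters land in $\F_{256}^{\times}$ because $17\mid 255$.

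The key step is to verify the isomorphism $\bar\rho\cong\Ind_{K}^{F}\chi$ for a specific such $\chi$. Standard induced-character formulas give $\Tr(\Ind_{K}^{F}\chi)(\Frob_{\gp})=0$ for primes $\gp$ of $F$ inert in $K/F$, and $\Tr(\Ind_{K}^{F}\chi)(\Frob_{\gp})=\chi(\gP)+\chi(\gP^{c})$ for split primes $\gp=\gP\gP^{c}$. One checks these identities modulo $\gm_{h}$ directly from the Magma eigenvalue tables: the vanishing at inert primes establishes the dihedral structure with splitting field $K$, and the values at split primes pin down $\chi$ among the $16$ nontrivial characters of $\Cl(K)$. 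Absolute irreducibility of $\Ind_{K}^{F}\chi$ is automatic from $\chi\neq\chi^{c}$, which holds because $\chi$ has odd order $17$ and $\chi^{c}=\chi^{-1}$ on the anticyclotomic class group (using that $\Cl(F)=1$, so $c$ acts as inversion on $\Cl(K)$). The Brauer-Nesbitt theorem together with Chebotarev density then upgrade the finitely many Frobenius identities to the full semisimple isomorphism.

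The parallel argument applied to $\theta'$ produces the character $\chi'$; in fact, $\chi$ and $\chi'$ are related by the action of $\Gal(L_{h}/K_{h})$ that swaps the two mod-$2$ maximal ideals $\gm_{h}$ and $\gm_{h}'$ of $\T_{h}$. The main technical obstacle is making the Chebotarev verification effective: one must compute $a_{\gp}(h)\bmod\gm_{h}$ at enough primes $\gp$ of $F$ to rule out every two-dimensional mod-$2$ representation of conductor dividing $\gq$ with the same Frobenius traces, which in practice reduces to an enumeration over primes of small norm below an effective Chebotarev bound derived from the discriminant of the presumptive splitting field $H_K\cdot F$. Once this finite check is carried out, both the existence of $\chi,\chi'$ and the identifications $\bar\rho=\Ind_K^F\chi$, $\bar\rho'=\Ind_K^F\chi'$ follow.
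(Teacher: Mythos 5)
Your proposal takes a genuinely different route from the paper. You aim to identify $\bar\rho$ with $\Ind_K^F\chi$ by a direct Faltings--Serre/Chebotarev comparison of Frobenius data, checking (i) that traces vanish at primes inert in $K/F$ to establish the dihedral shape, and (ii) which of the $16$ nontrivial characters of $\Cl_K$ produces the right values at split primes. The paper instead invokes Serre's conjecture for totally real fields in the totally ramified case (Gee--Savitt) together with a classification of the mod-$2$ eigensystems by Serre weight: the author first shows $\theta$ cannot have trivial Serre weight (because $\F_{16}$ does not occur among the coefficient fields of the trivial-weight eigensystems from \cite{dem09}), so $\theta$ must appear at the unique non-trivial weight; then shows $\Ind_K^F\chi$ also appears there (either because its coefficient field is $\F_{16}$, or via the alternative local argument that $\bar\rho_\chi|_{D_\gq}$ fails to be finite flat since $K_\gQ = F_\gq[\sqrt{\varpi}]$); finally, since there are only two eigensystems at that weight and they are permuted by $\Gal(F/\Q)$, the identification follows by elimination rather than by matching Frobenius traces. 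The payoff of the paper's route is precisely that it sidesteps the effective step your plan requires.

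That effective step is where your proposal has a genuine gap. You acknowledge the need for an effective Chebotarev bound, but the bound you invoke is ``derived from the discriminant of the presumptive splitting field $H_K$,'' which is circular: before the identification is established, you do not know that the splitting field of $\bar\rho$ is contained in (the Galois closure of) $H_K$, and an a priori bound on its ramification/discriminant is exactly what a Faltings--Serre argument is supposed to manufacture by controlling the deviation group. For image in $\GL_2(\F_{16})$ the deviation analysis is not routine, and nothing in the proposal carries it out or reduces it to a manageable finite list. A secondary caution: since we are in residue characteristic $2$, Brauer--Nesbitt requires agreement of characteristic polynomials (equivalently traces \emph{and} determinants), not merely traces; you would also want to record that the determinants match, which they do since both sides are trivial mod $2$, but this should be said. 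Finally, the assertion that $\chi$ and $\chi'$ are interchanged by the $\Gal(L_h/K_h)$-action swapping $\gm_h$ and $\gm_h'$ is off: it is the $\Gal(F/\Q)$-action (via $\sigma$) that carries $\theta$ to $\theta'$, and hence $\chi$ to $\chi'$; $\tau$ swaps the two primes of $K_h$ above $2$, which is a separate statement.
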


\begin{proof} We already computed the Hecke constituents of the space $S_2(1)$ in~\cite{dem09}. The mod $2$ Hecke eigensystems in that case
have coefficient fields $\F_{2^s}$ where $s = 1, 2, 8$. Therefore, since $\theta$ has coefficient field $\F_{16}$, it cannot arise from an eigenform of level $1$. 
By the Serre conjecture for totally real fields (the totally ramified case)~\cite{gs11}, it must appear on the quaternion algebra $D'$ with level $(1)$ and non-trivial 
weight. The same is true for $\theta'$. In fact, the analysis conducted above shows that they are the only eigensystems that can appear at that weight. 
(We note that there are only two Serre weights in this case.)

\medskip
Let $\chi:\, \Gal(\Qbar/K) \to \overline{\F}_2^\times$ be a character with trivial conductor such that $\chi^s \neq \chi$, where $\Gal(K/F) = \langle s \rangle$. 
By class field theory, we can identity $\chi$ with its image under the Artin map. Since $\chi$ is unramified, it must factor as 
$\chi: K^\times \backslash \A_K^\times \twoheadrightarrow \Cl_K \to \overline{\F}_2^\times$.
Furthermore, since $\Cl_K \simeq \Z/17\Z$, we must have 
$\chi: K^\times \backslash \A_K^\times \to \F_{2^8}^\times$, and the representation $\bar{\rho}_{\chi}:=\Ind_K^F \chi:\, \Gal(\Qbar/F) \to \GL_2(\F_{16})$ has 
coefficients in $\F_{16}$. So, $\bar{\rho}_{\chi}$ has level $(1)$ and non-trivial weight by the argument above. Therefore, it must be a 
Galois conjugate of $\bar{\rho}$. Up to relabelling, we can assume that 
$\bar{\rho} \simeq \bar{\rho}_{\chi}$. Since $\theta$ and $\theta'$ are $\Gal(F/\Q)$-conjugate, there is also a character 
$\chi': K^\times \backslash \A_K^\times \to \F_{2^8}^\times$ such that $\bar{\rho}' \simeq \bar{\rho}_{\chi'}$. 

\medskip
Alternatively, we can show that $\theta$ appears on $D'$ with the non-trivial weight without
using the fact that it has coefficients in $\F_{16}$. Indeed, we have 
$$\bar{\rho}_{\chi}|_{I_\gq} \simeq \begin{pmatrix} 1& \ast\\ 0 & 1\end{pmatrix}.$$
Let $K_{\gQ}$ be the completion of $K$ at $\gQ$, the unique prime above $\gq$. 
Since $K = F[\beta]$, and $\beta^2 = -2 - \alpha$ is a generator of $\gq$, then we have $K_{\gQ} = F_\gq[\sqrt{\varpi}]$,
where $\varpi$ is a uniformiser of $F_\gq$. Therefore, $\bar{\rho}_{\chi}|_{D_\gq}$ doesn't arise from a finite flat group 
scheme. Hence, $\bar{\rho}_{\chi}$ must have non-trivial weight. 
\end{proof}

We are now ready to state the main theorem of this section. 

\begin{thm}\label{thm:2-torsion} The Jacobians $\Jac(X_0^D(1))$ and $\Jac(C)$ of the curves $X_0^D(1)$ and $C$,
both defined over $\Q$, have the same field of $2$-torsion $N$. The field $N$ is the unique field unramified outside $2$, 
with Galois group $\Gal(N/\Q) = \Z/17\Z \rtimes (\Z/17\Z)^\times = F_{17}$. 
\end{thm}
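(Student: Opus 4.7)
The plan is to identify the common field of $2$-torsion with the Hilbert class field $H$ of $K$, and then to compute $\Gal(H/\Q)$. First, from the decomposition $\Jac(X_0^D(1))_F \sim A_f \times A_{f'} \times A_g \times A_{g'} \times A_h$ and the analogous one for $\Jac(C)_F$, the $2$-adic Tate modules of both Jacobians split as direct sums over the factors $A_X$. At each maximal ideal $\mathfrak{m}$ of $\End_F(A_X) \otimes \Z_2$ above $2$, the $\mathfrak{m}$-torsion gives a mod-$2$ Galois representation which the residual Hecke algebra analysis of the previous subsection identifies with either $\bar\rho$ or $\bar\rho'$. By Proposition~\ref{prop:dihedral-rep}, both are induced from unramified characters $\chi, \chi'$ of $G_K$ factoring through $\Cl_K \cong \Z/17\Z$, so each cuts out the Hilbert class field $H$ of $K$. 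Consequently the field of $2$-torsion of either Jacobian over $F$ is $H$.

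Next I would show that $H/\Q$ is Galois and unramified outside $2$. Because $K \subset \Q(\zeta_{64})$ is abelian over $\Q$, the Hilbert class field $H$ is canonically attached to $K$ as its maximal everywhere-unramified abelian extension, so any automorphism of $\Qbar/\Q$ carries $H$ to itself; hence $H/\Q$ is Galois. Since $K/\Q$ is ramified only at $2$ and $H/K$ is everywhere unramified, $H/\Q$ is unramified outside $2$. Both Jacobians descend to $\Q$ by Lemma~\ref{lem:field-of-defn} and Corollary~\ref{cor:quotient-curve}, so their fields of $2$-torsion over $\Q$ are the Galois closures over $\Q$ of the fields over $F$ computed above, which are both $H$.

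The remaining and main step is to identify $\Gal(H/\Q)$ with $F_{17}$. A direct computation of the stabiliser of $\beta = i(\zeta_{64} + \zeta_{64}^{-1})$ in $(\Z/64\Z)^\times$ shows $\Gal(K/\Q)$ is cyclic of order $16$, so $[H:\Q] = 17 \cdot 16 = 272 = |F_{17}|$. As $\gcd(17, 16) = 1$, Schur--Zassenhaus yields $\Gal(H/\Q) \cong \Z/17\Z \rtimes_\phi \Z/16\Z$ with $\phi \colon \Z/16\Z \to (\Z/17\Z)^\times \cong \Z/16\Z$; the crux is to prove $\phi$ is an isomorphism. Since $h(F) = 1$, complex conjugation acts as $-1$ on $\Cl_K$, forcing $-1 \in \image(\phi)$. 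If $\phi$ failed to be injective, its kernel would correspond to a proper intermediate subfield $K_0 \subsetneq K$ over which $H/K_0$ would be abelian, unramified outside $2$, and containing a cyclic degree-$17$ piece; ruling out each such possibility requires examining the $17$-part of the class group of $K_0$, and is where I expect the main difficulty. Once $\Gal(H/\Q) \cong F_{17}$ is established, the uniqueness of $H = N$ as the only such Galois extension of $\Q$ unramified outside $2$ follows from Elkies's argument~\cite{elk15}, completing the proof.
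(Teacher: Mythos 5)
Your identification of the common $2$-torsion field with the Hilbert class field $H_K$ of $K$ agrees with the paper's proof, as does the reduction via Proposition~\ref{prop:dihedral-rep} and the decomposition~\eqref{eq:jac-decomp}. Where your argument and the paper's diverge is in the crucial step of pinning down $\Gal(H_K/\Q)$, and this is precisely where your proposal has a genuine gap that you yourself flag.

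You correctly compute $[H_K:\Q]=272=|F_{17}|$ and invoke Schur--Zassenhaus to get $\Gal(H_K/\Q)\cong\Z/17\Z\rtimes_\phi\Z/16\Z$, and the observation that $h(F)=1$ forces complex conjugation to act as $-1$ on $\Cl_K$ is correct; but that only shows $-1\in\image(\phi)$, i.e.\ that $|\ker\phi|$ divides $8$. It does not rule out $\ker\phi$ of order $2$, $4$ or $8$, and you defer the real work (``examining the $17$-part of the class group of $K_0$'') without carrying it out. The paper avoids this entirely by using the inner-twist data that is the whole point of the note: from Table~\ref{table:eigen-identities} one has $\sigma(\gm_f)=\gm_{f'}$ and $\sigma^2(\gm_f)=\tau_f(\gm_f)$, where $\tau_f$ is a \emph{generator} of $\Gal(L_f/\Q)\cong\Z/4\Z$. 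This means the stabiliser $\langle\sigma^2\rangle\cong\Z/4\Z$ of $\gm_f$ in $\Gal(F/\Q)$ acts \emph{faithfully} on the mod-$2$ coefficients, which is exactly the faithfulness statement your $\phi$ needs; combining this with the $D_{17}$-image over $F$ and the index-$2$ step from $F$ to $\Q(\sqrt 2)$ (via $\sigma(\gm_f)=\gm_{f'}$) yields $\Gal(N_f/\Q)\cong F_{17}$ directly, with no class-group computations over intermediate fields. The paper then concludes $N_f=N_g=N_h=N$ and the uniqueness statement in one stroke from Harbater~\cite[Theorem~2.25]{har94}, rather than leaving uniqueness to a separate appeal to Elkies. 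So the missing idea in your proposal is the use of the inner-twist relation $\sigma^2(\gm_f)=\tau_f(\gm_f)$ to establish faithfulness of the $\Gal(F/\Q)$-action; this is what the base-change/Hecke-orbit compatibility developed in Sections~\ref{sec:bc-and-hecke-comp}--\ref{sec:potential-gl2-av} is designed to deliver, and without it your argument does not close.
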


\begin{proof} Let $\bar{\rho}_{f,2},\,\bar{\rho}_{f',2}:\,\Gal(\Qbar/F) \to \GL_2(\F_{16})$ be the mod $2$ Galois representations attached to $f$ and $f'$. By
Proposition~\ref{prop:dihedral-rep}, $\bar{\rho}_{f,2}$ and $\bar{\rho}_{f',2}$ are dihedral and we have that 
$\mathrm{im}(\bar{\rho}_{f,2}) = \mathrm{im}(\bar{\rho}_{f',2}) = D_{17}$. Let $M_f, M_{f'}$ be the fields cut out by $\bar{\rho}_{f,2}$ and $\bar{\rho}_{f',2}$.
Since $\sigma(\gm_f) = \gm_{f'}$, we have $M_{f'} = M_f^\sigma$.  By construction $M_f$ and $M_f^\sigma$ are unramified extension of $K$. So,
by uniqueness of the Hilbert class field, we must have $M_f = M_f^\sigma = M_f M_f^\sigma= H_K$, where $M_f M_f^\sigma$ is the
compositum of $M_f$ and $M_f^\sigma$; and $H_K$ is the Hilbert class field of $K$. 
Since $\sigma^2(\gm_f) = \tau_f(\gm_f)$, we have 
$$\Gal(N_f/\Q) = D_{17}\rtimes \Z/8\Z = F_{17}.$$
Letting $N_g$ and $N_h$ be the normal closures of the fields cut out by the mod $2$ representations attached to $g$ and $h$ respectively, 
the same argument shows that 
$$\Gal(N_g/\Q) = \Gal(N_h/\Q) = F_{17}.$$
By~\cite[Theorem 2.25]{har94}, there is a unique field $N$ ramified at $2$ and $\infty$, with Galois group $F_{17}$. 
Therefore, we must have $N = N_f = N_g = N_h.$
By using the decomposition in~\eqref{eq:jac-decomp}, we see that $N$ must be the field of $2$-torsion for both $\Jac(X_0^D(1))$ and $\Jac(C)$. 
\end{proof}

\begin{rem}\rm The field $N$ is the splitting field of the polynomial
\begin{align*}
H &:= x^{17} - 2x^{16} + 8x^{13} + 16x^{12} - 16x^{11} + 64x^9 - 32x^8 - 80x^7 + 32x^6 + 40x^5\\
&\qquad{} + 80x^4 + 16x^3 - 128x^2 - 2x + 68. 
\end{align*}
This polynomial was computed by Noam Elkies. His computation, together with our Theorem~\ref{thm:2-torsion}, answer a question
posed by Jeremy Rouse as to whether the field $N$ is the field of $2$-torsion of some abelian variety. We thank David P. Roberts for
bringing this \verb|mathoverflow.net| discussion~\cite{elk15} to our attention. 
\end{rem}

One can show that the field $K$ splits the quaternion algebra $D$. Let $\CO$ be the suborder of $\CO_K$ of index $\gQ^2$, where 
$\gQ$ is the unique prime above $\gq$ in $K$. Let $\Pic(\CO)$ be the Picard group of $\CO$. A quick \verb|Magma| calculation 
show that $\#\Pic(\CO) = 34 = 2\cdot 16 + 2$. We conclude this note with the following claim. 

\begin{conj}\label{conj:hyperelliptic} 
The curve $C$ is hyperelliptic over $F$. Its Weierstrass points are the ${\rm CM}$ points arising from $\Pic(\CO)$. 
\end{conj}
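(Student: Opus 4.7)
The plan is to exhibit a hyperelliptic involution $v$ on $C$ defined over $F$ and identify its fixed locus with the $34$ CM points coming from $\Pic(\CO)$. The arithmetic of the numerics is already striking: we have $g(C) = 16$, so a hyperelliptic $C$ must carry exactly $2g+2 = 34$ Weierstrass points, matching $\#\Pic(\CO) = 34$ exactly. This coincidence is the starting point.

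The first step is to construct $v$. Since $K/F$ splits $D$, any optimal embedding $\iota:\CO \hookrightarrow \CO_D$ gives a CM point on $X_0^D(1)$, and $\Pic(\CO)$ parametrises such embeddings modulo $\CO_D^\times$-conjugacy. Because $\CO$ has index $\gQ^2$ in $\CO_K$ with $\gQ$ the unique prime of $K$ above $\gq$, the local units at $\gQ$ include an element $\pi$ with $\pi^2 \in F^\times$ acting on the CM point and on an infinitesimal neighbourhood. I would try to assemble these local symmetries into a global Atkin-Lehner-type involution on $X_0^D(1)$ attached to $\CO$, then check that it descends through the quotient by $w_D$ to an involution $v$ on $C$. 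An alternative construction that I would pursue in parallel is to search for $v$ inside the normalizer of the quaternionic order in $D^\times$ acting on the upper half plane, looking specifically for a norm-$2$ element lying in $\CO$ but outside $\CO_D^\times F^\times$.

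The second step is to verify that $C/\langle v\rangle \simeq \mathbb{P}^1_F$, equivalently that $v^*$ acts as $-1$ on $H^0(C,\Omega^1)$. By Jacquet--Langlands, this cotangent space is canonically the $w = -1$ eigenspace in $S_2(\gq)^{\mathrm{new}}$, spanned by the Hecke orbits of $f,f',g,g'$. Having produced $v$ quaternionically, its action on $S_2^D(1)$ can be computed from its action on Brandt-matrix bases via $S_2^{D'}(\gq)^{\mathrm{new}}$; one then checks that the eigenvalue is $-1$ on each of the four orbits. Since $v$ commutes with all Hecke operators away from $\gq$ (the construction is Atkin--Lehner-like), it preserves the Hecke decomposition, so the check reduces to four scalar signs.

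The third step is to match the fixed locus with the stated $34$ CM points. Once $v$ is realised inside the quaternionic normalizer via an element of $\CO$ whose reduced norm generates $\gq^2$, Eichler's theory of optimal embeddings identifies the fixed points of $v$ on $X_0^D(1)$ with optimal embeddings $\CO \hookrightarrow \CO_D$ modulo conjugation, i.e.\ with $\Pic(\CO)$. Descending to $C$ requires checking that $w_D$ acts freely on these $34$ points, which I would verify by showing that $w_D \circ v$ has no fixed geometric points using a class-number calculation of the relevant optimal embeddings at $\gQ$.

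The hard part will be the first step, namely producing the involution $v$ as a genuine automorphism of $C/F$ rather than as a correspondence. The other steps, once $v$ is in hand, are essentially verifications by trace formulas and Jacquet--Langlands. As a sanity check independent of the construction, I would also compare with Theorem~\ref{thm:2-torsion}: the $2$-torsion of $\Jac(C)$ cuts out the field $N$ with $\Gal(N/\Q) = F_{17}$, and for a hyperelliptic Jacobian this $2$-torsion is generated by differences of Weierstrass points, which is consistent with the $34$ CM points from $\Pic(\CO)$ spanning a $\Z/17\Z$-quotient compatibly with the $(\Z/17\Z)^\times$-action of $\Gal(F/\Q)$.
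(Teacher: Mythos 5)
This statement is a conjecture in the paper, not a theorem: the author offers no proof, only the numerical coincidence $\#\Pic(\CO) = 34 = 2g(C) + 2$ as motivation, and the surrounding remarks make clear that the question is open (the author writes ``We hope to return to this problem in an upcoming paper''). So there is no paper-internal argument to compare yours against; what can be assessed is whether your sketch amounts to a proof. It does not, and in fact its central step runs against a remark the paper makes immediately after the conjecture.

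Your Step 1 proposes to produce the hyperelliptic involution $v$ as an Atkin--Lehner-type involution coming from the normalizer of the quaternionic order (an element of reduced norm generating a power of $\gq$, or a norm-$2$ element of $\CO$ outside $F^\times \CO_D^\times$). But the paper explicitly observes that $w_D$ is the \emph{unique} Atkin--Lehner involution on $X_0^D(1)$, i.e.\ $N_{D^\times}(\CO_D)/(F^\times\CO_D^\times) \cong \Z/2\Z$ is generated by $w_D$ alone; consequently any hyperelliptic involution on $C = X_0^D(1)/\langle w_D \rangle$ would have to be \emph{exceptional}, meaning it cannot arise from the quaternionic normalizer. This blocks the construction you outline: searching inside the normalizer for a further involution, or assembling local units at $\gQ$ into a global Atkin--Lehner-type symmetry, can only reproduce $w_D$ or the identity. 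Your Steps 2 and 3 (checking $v^* = -1$ on $H^0(C,\Omega^1)$ via Jacquet--Langlands, and matching fixed points with optimal embeddings of $\CO$) are reasonable verification strategies \emph{if} $v$ existed and came from the quaternionic side, but since the involution must be exceptional, the trace-formula and Eichler-embedding machinery you invoke does not apply to it in the standard form. Your own caveat---that ``the hard part will be the first step''---is exactly right; that step is the whole problem, and the route you propose for it is the one route the paper rules out. The numerical and $2$-torsion consistency checks you mention are genuine supporting evidence (and essentially coincide with what the paper itself offers as motivation), but they do not constitute a proof.
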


\begin{rem}\rm We believe that the curve $C$ is in fact hyperelliptic over $\Q$.
\end{rem}

\begin{rem}\rm We observe that $w_D$ is the unique Atkin-Lehner involution on $X_0^D(1)$. Therefore, if Conjecture~\ref{conj:hyperelliptic} is true, 
then the hyperelliptic involution on $C$ must be an exceptional one. We note that, for $F = \Q$, Michon~\cite{mich81} provides a complete list of all Shimura 
curves with square-free level that are hyperelliptic. (This result was independently obtained by Ogg in an unpublished work.) But, in general, the question
of finding those Shimura curves which admit a hyperelliptic quotient is still wide open even for $F = \Q$. In that regards, 
Conjecture~\ref{conj:hyperelliptic} is rather striking. 
\end{rem}

\begin{rem}\rm One should be able to find an explicit equation for $C = X_0^D(1)/\langle w_D \rangle$ using~\cite{voight-willis14}.
But, currently, the strategy for doing so is not fully implemented. It should also be possible to use a generalisation of the $p$-adic 
approach discussed in~\cite{cm14}, which was inspired by~\cite{kur94}. We hope to return to this problem in an upcoming paper. 
\end{rem}

\providecommand{\bysame}{\leavevmode\hbox to3em{\hrulefill}\thinspace}
\providecommand{\MR}{\relax\ifhmode\unskip\space\fi MR }
\providecommand{\MRhref}[2]{%
  \href{http://www.ams.org/mathscinet-getitem?mr=#1}{#2}
}
\providecommand{\href}[2]{#2}

\end{document}